\theoremstyle{definition}
\newtheorem{definition}{Definition}[section]
\newtheorem{remark}[definition]{Remark}
\newtheorem{algorithm}[definition]{Algorithm}
\theoremstyle{plain}
\newtheorem{proposition}[definition]{Proposition}
\newtheorem{lemma}[definition]{Lemma}
\newtheorem{theorem}[definition]{Theorem}
\newtheorem{corollary}[definition]{Corollary}
\numberwithin{equation}{section}
\def\fullref#1#2{%
    {#1\space\penalty 200\relax\ref{#2}}%
}
\newcommand{\defterm}[1]{\textit{#1}}
\newcommand{\rpad}{\delta_{\mathrm{R}}}
\newcommand{\lpad}{\delta_{\mathrm{L}}}
\newcommand{\gen}[1]{\left\langle #1 \right\rangle}
\newcommand{\pres}[2]{\left\langle #1\:|\:#2 \right\rangle}
\newcommand{\nset}{\mathbb{N}}
\newcommand{\emptyword}{\varepsilon}
\newcommand{\rel}[1]{\mathcal{#1}}
\newcommand{\imreduces}{\rightarrow}
\newcommand{\rev}{\mathrm{rev}}
\begin{document}

\title{Finite Gr\"{o}bner--Shirshov bases for Plactic algebras and biautomatic structures for Plactic monoids}
\author{Alan J.Cain, Robert D. Gray, Ant\'onio Malheiro}

\thanks{The first author was supported by the European Regional Development Fund through the programme {\sc COMPETE} and
  by the Portuguese Government through the {\sc FCT} (Funda\c{c}\~{a}o para a Ci\^{e}ncia e a Tecnologia) under the
  project {\sc PEst-C}/{\sc MAT}/{\sc UI}0144/2011 and through an {\sc FCT} Ci\^{e}ncia 2008 fellowship. For the second
  and third author, this work was supported by CAUL within the project {\sc PEst-OE}/{\sc MAT}/{\sc UI}0143/2012--13
  financed by Funda\c{c}\~{a}o para a Ci\^{e}ncia e a Tecnologia.}

\maketitle

\address[AJC]{%
Centro de Matem\'{a}tica, Faculdade de Ci\^encias, Universidade do Porto\\
Rua do Campo Alegre 687, 4169--007 Porto, Portugal
}
\email{%
ajcain@fc.up.pt
}
\webpage{%
www.fc.up.pt/pessoas/ajcain/
}

\address[RDG]{%
Centro de \'{A}lgebra da Universidade de Lisboa\\
Av. Prof. Gama Pinto 2, 1649--003 Lisboa, Portugal
}
\email{Robert.D.Gray@uea.ac.uk}

\address[AM]{%
Centro de \'{A}lgebra da Universidade de Lisboa\\
Av. Prof. Gama Pinto 2, 1649--003 Lisboa, Portugal \\
\null\quad and \\
Departamento de Matem\'{a}tica,\\
Faculdade de Ci\^{e}ncias e Tecnologia da Universidade Nova de Lisboa,\\
2829--516 Caparica, Portugal
}
\email{ajm@fct.unl.pt}

\begin{abstract}
This paper shows that every Plactic algebra of finite rank admits a
finite Gr\"{o}bner--Shirshov basis. The result is proved by using the
combinatorial properties of Young tableaux to construct a finite
complete rewriting system for the corresponding Plactic monoid, which
also yields the corollaries that Plactic monoids of finite rank have
finite derivation type and satisfy the homological finiteness properties 
left and right $\mathrm{FP}_\infty$. Also, answering a question of Zelmanov, 
we apply this rewriting system and other techniques to show that Plactic monoids 
of finite rank are biautomatic. 
\keywords{Plactic algebra; Plactic monoid; Gr\"{o}bner--Shirshov basis; complete rewriting system; Young tableau; automatic monoids}
\msc{16S36; 68Q42, 20M25, 20M35}
\end{abstract}

\section{Introduction}

The Plactic monoid has its origins in work of Schensted
\cite{schensted_longest} and Knuth \cite{knuth_permutations} concerned
with certain combinatorial problems and operations on Young
tableaux. It was later studied in depth by Lascoux and Sh\"utzenberger
\cite{lascoux_plaxique} and has since become an important tool in several
aspects of representation theory and algebraic combinatorics; see
\cite{fulton_young,lothaire_algebraic}.  The first significant application of the
Plactic monoid was to the Littlewood--Richardson rule for Schur
functions. This is explained in detail in the appendix to the second
edition of J. A. Green's influential monograph on the representation
theory of the general linear group \cite{green_polynomial}. The
Littlewood--Richardson rule \cite{littlewood_group} is one of the most
important results in the theory of symmetric functions. It provides a
combinatorial rule for expressing a product of two Schur functions as
a linear combination of Schur functions. Since Schur functions in $n$
variables are the irreducible polynomial characters of
$GL_n(\mathbb{C})$, the Littlewood-Richardson rule gives a tensor
product rule for $GL_n(\mathbb{C})$.  One of the most enlightening
proofs of the Littlewood--Richardson rule (see
\cite[Section~5.4]{lothaire_algebraic}) is given by lifting the calculus of the
Schur function to the integral monoid ring of the Plactic monoid
(called the tableau ring; see \cite[Chapter~2]{fulton_young}).

Subsequently the Plactic monoid has been found to have applications in
a range of areas including a combinatorial description of
Kostka--Foulkes polynomials \cite{lascoux_plaxique,lascoux_foulkes}, and to Kashiwara's
theory of crystal bases \cite{date_representations,kashiwara_crystalbases} leading to the
definition of Plactic algebras associated to all classical simple Lie
algebras \cite{littelmann_plactic,lascoux_crystal,krob_noncommutative}.  Further results on
Robinson--Schensted correspondence and the Plactic relations may be
found in \cite{date_representations,leclerc_robinson}.  Several variations and generalizations
of the Plactic monoid have been proposed and investigated including
hypoplactic monoids \cite{krob_noncommutative}, and shifted Plactic monoids
\cite{serrano_shifted}. In \cite{duchamp_plactic} it is show that the Hilbert series of
the Plactic monoid is given by the Schur--Littlewood formula, and that
there are exactly three families of ternary monoids with this Hilbert
series. Sch\"utzenberger \cite{schutzenberger_pour} argues that the
Plactic monoid ought to be considered as ``one of the most fundamental
monoids in algebra''. He cites three reasons for his own personal
``weakness'' for the Plactic monoid, the first of them being the
application to symmetric functions mentioned above.

Various aspects of the corresponding semigroup algebras, the Plactic
algebras, have been investigated; see, for example,
\cite{cedo_plactic,lascoux_keys}.  These algebras are important
special cases in the more general study of algebras
defined by homogeneous semigroup presentations \cite{cedo_finitelypresented}.
Frequently, fundamental problems about such semigroup algebras require
detailed analysis of the corresponding semigroups.  An important
example of this is given by the theory of Gr\"{o}bner--Shirshov bases.
Kubat \& Okni\'{n}ski showed that the Plactic algebra of rank $3$ has
a finite Gr\"{o}bner--Shirshov basis \cite[Theorem~1]{kubat_grobner}
and that Plactic algebras of rank $4$ or more do not admit a finite
Gr\"{o}bner--Shirshov basis with respect to the degree-lexicographic
ordering over the usual generating set for the Plactic monoid
\cite[Theorem~3]{kubat_grobner}. In contrast, the related Chinese
monoid admits a finite complete rewriting system with respect to the usual
generating set \cite{guzelkarpuz_fcrs}, and so its semigroup algebra,
the Chinese algebra, is known to admit a finite Gr\"{o}bner--Shirshov
basis \cite{chen_grobner}.

The first aim of this paper is to use the combinatorial properties
of Young tableaux to construct finite complete rewriting systems for
Plactic monoids of arbitrary finite rank, and thus prove that the
corresponding Plactic algebras admit finite Gr\"{o}bner--Shirshov
bases (see \cite{heyworth_rewriting} for an explanation of the
connection between Gr\"obner--Shirshov bases and complete rewriting
systems).  The rewriting system is \emph{not} over the usual
generating set for the Plactic monoid; rather, the generating set
comprises the (finite) set of columns of Young tableaux.  As a
corollary we deduce that Plactic monoids of finite rank satisfy the
homological finiteness property $\mathrm{FP}_\infty$, a result which
gives information about the existence of free resolutions of
$\mathbb{Z}M_n$-modules, where $\mathbb{Z}M_n$ is the tableau ring
featuring in the theory of symmetric functions outlined above.

During the writing of this paper, the authors came across the work of
Chen \& Li \cite{chen_new}, who exhibit infinite complete rewriting
systems for Plactic monoids over the (infinite) set of rows of Young
tableaux. Thus Chen \& Li's work yields infinite Gr\"{o}bner--Shirshov
bases for Plactic algebras. Part of their reasoning is an analogue for
rows of \fullref{Lemma}{lem:incompcols} below, but they use a direct,
more technical, proof and later recover as a corollary of their main
result the fact that tableaux form a cross-section of the Plactic
monoid.

As a consequence of the Schensted insertion algorithm and the
representation of elements by tableaux, it follows that the Plactic
monoid has word problem that is solvable in quadratic time.  This
leads us naturally to the second major theme of the present article:
the subject of automatic structures.  The concept of an automatic
group was introduced in order to describe a large class of groups with
easily solvable word problem.  The best general reference for the
theory of automatic groups is the book \cite{epstein_wordproc}.  The
notion has been extended to automatic monoids and semigroups
\cite{campbell_autsg}.  In both cases the defining property is the
existence of a rational set of normal forms (with respect to some
finite generating set $A$) such that we have, for each generator in
$A$, a finite automaton that recognizes pairs of normal forms that
differ by multiplication by that generator. It is a consequence of the
definition that automatic monoids (and in particular automatic groups)
have word problem that is solvable in quadratic time
\cite[Corollary~3.7]{campbell_autsg}.

Automatic groups have attracted a lot of attention over the last $20$
years, in part because of the large number of natural and important
classes of groups that have this property.  The class of automatic
groups includes: finite groups, free groups, free abelian groups,
various small cancellation groups \cite{gersten_smallcancellation},
Artin groups of finite and large type \cite{holt_artingroups}, Braid
groups, and hyperbolic groups in the sense of Gromov
\cite{gromov_hyperbolic}. In parallel, the theory of automatic monoids
has been extended and developed over recent years.  Classes of monoids
that have been shown to be automatic include divisibility monoids
\cite{picantin_finite} and singular Artin monoids of finite type
\cite{corran_singular}.  Several complexity and decidability results
for automatic monoids are obtained in \cite{lohrey_decidability}.
Other aspects of the theory of automatic monoids that have been
investigated include connections with the theory of Dehn functions
\cite{otto_dehn} and complete rewriting systems
\cite{otto_automonversus}.

Given the algorithmic properties of the Plactic monoid mentioned
above, the natural question of whether the Plactic monoid itself
admits an automatic structure was asked by Efim Zelmanov [during his
  plenary lecture at the international conference \emph{Groups and
    Semigroups: Interactions and Computations} (Lisbon, 25--29 July
  2011)]. The second main result of this article is an affirmative
answer to this question.  Beginning with the finite complete rewriting
system obtained in Section~\ref{sec_CRS}, we shall show how for
Plactic monoids finite transducers may be constructed to perform left
(respectively right) multiplication by a generator. We then apply this
result to show that Plactic monoids of arbitrary finite rank are
biautomatic (the strongest form of automaticity for monoids).

\section{Preliminaries}

This paper assumes familiarity with rewriting systems,
Gr\"{o}bner--Shirshov bases, automata and regular languages, and
transducers and rational relations.

For background information, see, for example, \cite{book_srs} on
complete rewriting systems; \cite{ufnarovski_introduction} on
Gr\"{o}bner--Shirshov bases; \cite{heyworth_rewriting} on the
connection between them. See also \cite{hopcroft_automata} on automata
and regular languages and \cite{berstel_transductions}) on transducers
and rational relations.

We denote the empty word (over any alphabet) by $\emptyword$. For an
alphabet $A$, we denote by $A^*$ the set of all words over $A$. When
$A$ is a generating set for a monoid $M$, every element of $A^*$ can
be interpreted either as a word or as an element of $M$. For words
$u,v \in A^*$, we write $u=v$ to indicate that $u$ and $v$ are equal
as words and $u=_M v$ to denote that $u$ and $v$ represent the same
element of the monoid $M$. The length of $u \in A^*$ is denoted
$|u|$. For a relation $\rel{R}$ on $A^*$, the presentation
$\pres{A}{\rel{R}}$ defines [any monoid isomorphic to]
$A^*/\rel{R}^\#$, where $\rel{R}^\#$ denotes the congruence generated
by $\rel{R}$.

\subsection{Plactic monoid}

This section recalls only the relevant definition and properties of
the Plactic monoid; for a full introduction, see
\cite[Chapter~5]{lothaire_algebraic}.

Let $n \in \nset$. Let $A$ be the finite ordered alphabet $\{1 < 2 <
\ldots < n\}$. Let $\rel{R}$ be the set of defining relations
\begin{equation}
\label{eq:placticrel}
\{(xzy,zxy) : x \leq y < z\} \cup \{(yxz,yzx) : x<y\leq z\}.
\end{equation}
Then the \defterm{Plactic monoid} $M_n$ is presented by
$\pres{A}{\rel{R}}$.

A \defterm{row} is a non-decreasing word in $A^*$ (that is, a word
$\alpha = \alpha_1\cdots \alpha_k$, where $\alpha_i \in A$, in which
$\alpha_i \leq \alpha_{i+1}$ for all $i =1,\ldots,k-1$). Let $\alpha =
\alpha_1\cdots \alpha_k$ and $\beta = \beta_1\cdots\beta_l$ (where
$\alpha_i,\beta_i \in A$) be rows. The row $\alpha$
\defterm{dominates} the row $\beta$, denoted $\alpha \triangleright
\beta$, if $k \leq l$ and $\alpha_i > \beta_i$ for all $i =
1,\ldots,k$.

Any word $w \in A^*$ has a decomposition as a product of rows of
maximal length $w = \alpha^{(1)}\cdots\alpha^{(k)}$. Such a word $w$
is a \defterm{tableau} if $\alpha^{(i)} \triangleright \alpha^{(i+1)}$
for all $i = 1,\ldots,k-1$. It is usual to write tableaux in a planar
form, with the rows placed in order of domination and
left-justified. For example, the tableau $6\;3455\;11235$ is written as follows:
\[
\begin{tikzpicture}
\matrix [matrix of math nodes,nodes={rectangle,draw,minimum width=6mm,minimum height=6mm},row sep={between borders,-\pgflinewidth},column sep={between borders,-\pgflinewidth}]
{
6 \\
3 & 4 & 5 & 5 \\
1 & 1 & 2 & 3 & 5 \\
};
\end{tikzpicture}
\]
The set of tableaux form a cross-section of the Plactic monoid $M_n$
\cite[Theorem~5.2.5]{lothaire_algebraic}. For each $u \in A^*$, denote
by $P(u)$ the unique tableau with $P(u) =_{M_n} u$. If $u$ is a tableau,
$P(u) = u$. Since the defining relations in the presentation
$\pres{A}{\rel{R}}$ preserve the number of symbols, it follows that
$|P(u)| = |u|$ for all $u \in A^*$.

A \defterm{column} is a strictly decreasing word in $A^*$ (that is, a
word $\alpha = \alpha_k\cdots \alpha_1$, where $\alpha_i \in A$, in which
$\alpha_{i+1} > \alpha_{i}$ for all $i =1,\ldots,k-1$). [Notice the
  decreasing order of the subscripts on symbols of columns, so as to
  match the order of the symbols themselves.] This definition matches
the notion of a column in the planar representation of a tableau.

Define a relation $\succeq$ on columns as follows: if $\alpha =
\alpha_k\cdots\alpha_1$ and $\beta = \beta_l\cdots\beta_1$, then
$\alpha \succeq \beta$ if and only if $k \geq l$ and $\alpha_i \leq
\beta_i$ for all $i \leq l$. Thus $\alpha \succeq \beta$ if and only if
the column $\alpha$ can appear immediately to the left of $\beta$ in
the planar representation of a tableau.

For any tableau $w$, denote by $C(w)$ the word obtained by reading
(the planar representation of) that tableau column-wise from left to
right and top to bottom. In the example above, $C(6\;3455\;11235) =
631\;41\;52\;53\;5$. Then $C(w) =_{M_n} w$ for all tableau $w$
\cite[Problem~5.2.4]{lothaire_algebraic}. 

The following result states the key combinatorial facts about
tableaux:

\begin{theorem}[{\cite[Theorems~1 \&~2]{schensted_longest}; see also \cite[Theorem~5.1.1]{lothaire_algebraic}}]
\label{thm:schensted}
Let $u \in A^*$. The number of columns in $P(u)$ is equal to
the length of the longest non-decreasing subsequence in $u$. The
number of rows in $P(u)$ is equal to the length of the longest
decreasing subsequence in $u$.
\end{theorem}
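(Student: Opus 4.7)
My plan is to prove Schensted's theorem by combining two ingredients: (i) verifying that both the length of the longest non-decreasing subsequence and the length of the longest strictly decreasing subsequence of a word are invariants of the plactic congruence, and (ii) computing these two statistics directly on the column reading $C(T)$ of a tableau $T$. Since any $u \in A^*$ satisfies $u =_{M_n} C(P(u))$ by the cross-section property \cite[Theorem~5.2.5]{lothaire_algebraic} together with the identity $C(T) =_{M_n} T$, combining (i) and (ii) will yield the theorem.

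For step (i) I would check the two families of defining relations in~\eqref{eq:placticrel} case by case. Given a word $\alpha\, xzy\, \beta$ and its plactic rewrite $\alpha\, zxy\, \beta$ (with $x \leq y < z$), any strictly monotone subsequence of either word can use at most two of the three middle positions; a short enumeration lists the possible length-two middle contributions on each side, and each such contribution on one side can be matched, at no loss of total length, by an available contribution on the other side after suitably tightening or relaxing the compatibility constraints on the last letter of $\alpha$ and the first letter of $\beta$. An entirely analogous case analysis handles the second relation $yxz \sim yzx$ (with $x < y \leq z$), and the cases of empty and single-letter middle contributions are immediate.

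For step (ii) I would use two structural observations about the column reading $C(T)$. First, the entries of any single column of $T$ appear consecutively in $C(T)$, read top-to-bottom and hence strictly decreasing; therefore any non-decreasing subsequence of $C(T)$ uses at most one entry per column, with equality witnessed by the bottom row of $T$ (which is non-decreasing and has exactly one entry in each column). Dually, the entries of any single row of $T$ appear in $C(T)$ in the order of increasing column index and hence as a non-decreasing subsequence; therefore any strictly decreasing subsequence of $C(T)$ uses at most one entry per row, with equality witnessed by the leftmost column of $T$, which is strictly decreasing top-to-bottom and appears as the initial segment of $C(T)$. Consequently, the longest non-decreasing subsequence of $C(T)$ has length equal to the number of columns of $T$, and the longest strictly decreasing subsequence has length equal to the number of rows of $T$.

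The main obstacle is the case analysis in step (i): while conceptually straightforward, matching the strictly monotone middle contributions across each defining relation requires attention to the boundary cases of the inequalities $x \leq y < z$ and $x < y \leq z$, where strict and non-strict conditions interact subtly with the strict and non-strict monotonicity of the subsequences being counted. An alternative route, closer to Schensted's original argument, would be to prove both identities by induction on $|u|$ using the row-insertion realisation of $P(u)$, tracking the bottom row for the first claim and the leftmost column for the second; that approach has the advantage of being self-contained but trades the subsequence bookkeeping above for an analysis of multi-row bumping chains.
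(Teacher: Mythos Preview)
The paper does not supply its own proof of this statement: Theorem~\ref{thm:schensted} is quoted from Schensted's paper and Lothaire's book and used as a black box. So there is no in-paper argument to compare against.

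Your proposed argument is sound. Step~(ii) is clean and correct: in the column reading $C(T)$, each column is a strictly decreasing block, so a non-decreasing subsequence meets each column at most once, while the bottom row realises the bound; dually, the entries of each row occur in $C(T)$ in left-to-right (hence non-decreasing) order, so a strictly decreasing subsequence meets each row at most once, and the leftmost column realises that bound. Step~(i) is also correct, and your sketch can be made precise: for each of the two Knuth relations, any length-$2$ monotone contribution from the three middle positions on one side can be matched by a length-$2$ monotone contribution on the other side whose endpoint constraints on the adjacent letters of $\alpha$ and $\beta$ are no more restrictive (e.g.\ the pair $xz$ in $xzy$ is replaced by $xy$ in $zxy$, and $z \leq b$ forces $y \leq b$ since $y < z$). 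The length-$0$, length-$1$, and length-$3$ middle contributions are immediate, so both subsequence statistics are Knuth invariants.

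Your approach differs from Schensted's original inductive argument (via row insertion), and it relies on the cross-section property and the identity $C(T) =_{M_n} T$, which in Lothaire are established independently of Theorem~5.1.1; so there is no circularity. The trade-off is exactly as you note: the Knuth-invariance route is conceptually transparent but front-loads a symmetric case analysis, whereas the insertion-based induction avoids that analysis at the cost of tracking bumping chains.
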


Let $w$ be a tableau and let $\gamma \in A$. The unique tableau
$P(w\gamma)$ equal to $w\gamma$ in $M_n$ can be computed via
Schensted's algorithm \cite[\S~5.1--2]{lothaire_algebraic}, which we
recall here:

\begin{algorithm}[Schensted's algorithm]
\label{alg:schensted}
~\par\nobreak
\textit{Input:} A tableau $w$ with rows
$\alpha^{(1)},\ldots,\alpha^{(k)}$ and a symbol $\gamma \in A$.

\textit{Output:} The unique tableau $P(w\gamma)$ equal to $w\gamma$
in $M_n$.

\textit{Method:}
\begin{enumerate}

\item If $\alpha^{(k)}\gamma$ is a row, the result is
$\alpha^{(1)}\cdots\alpha^{(k)}\gamma$.

\item If $\alpha^{(k)}\gamma$ is not
a row, then suppose $\alpha^{(k)} = \alpha_1\cdots \alpha_l$ (where
$\alpha_i \in A$) and let $j$ be minimal such that $\alpha_j >
\gamma$. Then the result is
$P(\alpha^{(1)}\cdots\alpha^{(k-1)}\alpha_j)\alpha'^{(k)}$, where
$\alpha'^{(k)} = \alpha_1\cdots
\alpha_{j-1}\gamma\alpha_{j+1}\cdots\alpha_l$.

\end{enumerate}
\end{algorithm}
Notice that in case~2, the algorithm replaces $\alpha_j$ by $\gamma$
in the lowest row and recursively right-multiplies by $\alpha_j$ the
tableau formed by all rows except the lowest. This is referred to as
`bumping' $\alpha_j$ to a higher row. When $\alpha_j$ is bumped, it
will be inserted into the row above either in the same column or in
some column further to the left, as shown in
\fullref{Figure}{fig:bump}. This happens because columns are strictly
decreasing from top to bottom, so either the cell above $\alpha_j$
contains some symbol $\eta$ greater than $\alpha_j$, or $\alpha_j$ is
the topmost element of its column. In the former case, $\alpha_j$ will
be inserted so as to replace the leftmost symbol greater than
$\alpha_j$, which must either be to the left of $\eta$ or $\eta$
itself, since rows are non-decreasing from left to right. In the
latter case, $\alpha_j$ will be appended to the end of the row above
and so will be placed either in the same column or further left.

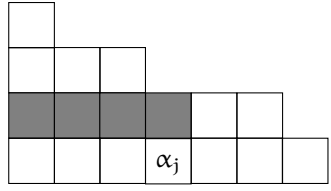
\begin{figure}[t]
\centering
\begin{tikzpicture}
\matrix [matrix of math nodes,nodes={rectangle,draw,minimum width=6mm,minimum height=6mm},row sep={between borders,-\pgflinewidth},column sep={between borders,-\pgflinewidth}]
{
\mathstrut \\
\mathstrut & \mathstrut & \mathstrut \\
|[fill=gray]| \mathstrut & |[fill=gray]| \mathstrut & |[fill=gray]| \mathstrut & |[fill=gray]| \mathstrut & \mathstrut & \mathstrut \\
\mathstrut & \mathstrut & \mathstrut & \mathstrut\alpha_j & \mathstrut & \mathstrut & \mathstrut \\
};
\end{tikzpicture}
\caption{If the symbol $\alpha_j$ is bumped during Schensted's
  algorithm, it must be inserted into one of the shaded cells, since
  the cell above $\alpha_j$ must contain a symbol strictly greater than
  $\alpha_j$.}
\label{fig:bump}
\end{figure}

For any word $u \in A^*$, the tableau $P(u)$ can be effectively
computed by starting with the empty word, which is a valid tableau,
and iteratively applying Schensted's algorithm.

\subsection{Biautomatic structures}

This subsection contains the definitions and basic results from the
theory of automatic and biautomatic monoids needed hereafter. For
further information on automatic semigroups,
see~\cite{campbell_autsg}. 

\begin{definition}
Let $A$ be an alphabet and let $\$$ be a new symbol not in $A$. Define
the mapping $\rpad : A^* \times A^* \to ((A\cup\{\$\})\times (A\cup
\{\$\}))^*$ by
\[
(u_1\cdots u_m,v_1\cdots v_n) \mapsto
\begin{cases}
(u_1,v_1)\cdots(u_m,v_n) & \text{if }m=n,\\
(u_1,v_1)\cdots(u_n,v_n)(u_{n+1},\$)\cdots(u_m,\$) & \text{if }m>n,\\
(u_1,v_1)\cdots(u_m,v_m)(\$,v_{m+1})\cdots(\$,v_n) & \text{if }m<n,
\end{cases}
\]
and the mapping $\lpad : A^* \times A^* \to ((A\cup\{\$\})\times (A\cup \{\$\}))^*$ by
\[
(u_1\cdots u_m,v_1\cdots v_n) \mapsto
\begin{cases}
(u_1,v_1)\cdots(u_m,v_n) & \text{if }m=n,\\
(u_1,\$)\cdots(u_{m-n},\$)(u_{m-n+1},v_1)\cdots(u_m,v_n) & \text{if }m>n,\\
(\$,v_1)\cdots(\$,v_{n-m})(u_1,v_{n-m+1})\cdots(u_m,v_n) & \text{if }m<n,
\end{cases}
\]
where $u_i,v_i \in A$.
\end{definition}

\begin{definition}
\label{def:autstruct}
Let $M$ be a monoid. Let $A$ be a finite alphabet representing a set
of generators for $M$ and let $L \subseteq A^*$ be a regular language such
that every element of $M$ has at least one representative in $L$.  For
each $a \in A \cup \{\emptyword\}$, define the relations
\begin{align*}
L_a &= \{(u,v): u,v \in L, {ua} =_M {v}\}\\
{}_aL &= \{(u,v) : u,v \in L, {au} =_M {v}\}.
\end{align*}
The pair $(A,L)$ is an \defterm{automatic structure} for $M$ if
$L_a\rpad$ is a regular languages over $(A\cup\{\$\}) \times
(A\cup\{\$\})$ for all $a \in A \cup \{\emptyword\}$. A monoid $M$ is
\defterm{automatic} if it admits a automatic structure with respect to
some generating set.

The pair $(A,L)$ is a \defterm{biautomatic structure} for $M$ if
$L_a\rpad$, ${}_aL\rpad$, $L_a\lpad$, and ${}_aL\lpad$ are regular
languages over $(A\cup\{\$\}) \times (A\cup\{\$\})$ for all $a \in A
\cup \{\emptyword\}$. A monoid $M$ is \defterm{biautomatic} if it
admits a biautomatic structure with respect to some generating
set. [Note that biautomaticity implies automaticity.]
\end{definition}

Unlike the situation for groups, biautomaticity for monoids and
semigroups, like automaticity, is dependent on the choice of
generating set \cite[Example~4.5]{campbell_autsg}. However, for
monoids, biautomaticity and automaticity are independent of the choice
of \emph{semigroup} generating sets \cite[Theorem~1.1]{duncan_change}.

Hoffmann \& Thomas have made a careful study of biautomaticity for
semigroups \cite{hoffmann_biautomatic}. They distinguish four notions
of biautomaticity for semigroups:
\begin{itemize}

\item \defterm{right-biautomaticity}, where $L_a\rpad$ and ${}_aL\rpad$ are
  regular languages;

\item \defterm{left-biautomaticity}, where $L_a\lpad$ and ${}_aL\lpad$ are
  regular languages;

\item \defterm{same-biautomaticity}, where $L_a\rpad$ and ${}_aL\lpad$ are
  regular languages;

\item \defterm{cross-biautomaticity}, where ${}_aL\rpad$ and $L_a\lpad$ are
  regular languages.

\end{itemize}
These notions are all equivalent
for groups and more generally for cancellative semigroups
\cite[Theorem~1]{hoffmann_biautomatic} but distinct for semigroups
\cite[Remark~1 \& \S~4]{hoffmann_biautomatic}. In the sense used in
this paper, `biautomaticity' implies \emph{all four} notions of
biautomaticity above.

In proving certain that $R\rpad$ or $R\lpad$ is regular, where $R$ is
a relation on $A^*$, a useful strategy is to prove that $R$ is a
rational relation (that is, a relation recognized by a finite
transducer \cite[Theorem~6.1]{berstel_transductions}) and then apply
the following result, which is a combination of
\cite[Corollary~2.5]{frougny_synchronized} and
\cite[Proposition~4]{hoffmann_biautomatic}:

\begin{proposition}
\label{prop:rationalbounded}
If $R \subseteq A^* \times A^*$ is rational relation and there is a
constant $k$ such that $\bigl||u|-|v|\bigr| \leq k$ for all $(u,v) \in
R$, then $R\rpad$ and $R\lpad$ are regular.
\end{proposition}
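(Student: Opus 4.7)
The plan is to handle $R\rpad$ first and then to deduce regularity of $R\lpad$ by a reversal argument. Since $R$ is rational, fix a finite transducer $T$ with state set $Q$ recognising $R$. The key intermediate claim is a \emph{bounded-lead} property: although along an arbitrary run of $T$ the difference between the number of symbols already consumed from the two tapes could a priori grow unboundedly, the hypothesis $\bigl||u|-|v|\bigr|\le k$ forces the existence of a constant $K$, depending only on $T$ and $k$, such that every pair $(u,v) \in R$ admits an accepting computation of $T$ in which, at every instant, the numbers of symbols already read from the two tapes differ by at most $K$. Establishing this claim is the main obstacle; the idea is a pumping argument. Any loop $p \to p$ occurring in an accepting run that reads strictly more symbols from one tape than from the other could be iterated to produce pairs in $R$ of arbitrarily large length discrepancy, contradicting the bound $k$. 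Hence in every accepting run all loops must be balanced (reading the same number of symbols from each tape), and any excursion of the lead is confined to simple (loop-free) sub-paths, whose length, and therefore whose net contribution to the lead, is at most a constant depending on $|Q|$ and the maximum letters emitted by a single transition.

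Once bounded lead is in hand, I construct a finite automaton $T'$ on the product alphabet $(A\cup\{\$\})\times(A\cup\{\$\})$ whose states record a state of $T$, a short buffer of length at most $K$ holding the portion of whichever tape is currently ahead of the other in the $T$-simulation, and a flag indicating which side is ahead. On reading a pair $(a,b)$ from $A \times A$, the symbols are appended to the appropriate buffers and $T$-transitions are fired as soon as the contents allow; the symbol $\$$ is permitted on a coordinate only after the corresponding tape of $T$ has been fully consumed, and on such steps it simply lets the other buffer drain while the $T$-simulation continues. By construction, $T'$ accepts a padded word $(u,v)\rpad$ if and only if $(u,v) \in R$, so $R\rpad$ is regular.

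For $R\lpad$, observe that the reversed relation $\rev(R) = \{(\rev(u),\rev(v)) : (u,v) \in R\}$ is again a rational relation, since rationality is preserved by reversing the transducer, and it still has length discrepancy bounded by $k$. Applying the previous step to $\rev(R)$ yields regularity of $\rev(R)\rpad$. A direct check of the definitions of $\rpad$ and $\lpad$ shows that, for each $(u,v) \in A^* \times A^*$, the padded word $(u,v)\lpad$ is the letter-by-letter reversal of $(\rev(u),\rev(v))\rpad$ (viewed as a word over the product alphabet). Since regular languages over any alphabet are closed under reversal, $R\lpad$ is regular as well.
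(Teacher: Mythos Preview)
The paper does not actually prove this proposition: it is quoted as a combination of \cite[Corollary~2.5]{frougny_synchronized} and \cite[Proposition~4]{hoffmann_biautomatic}, with no argument given. So there is nothing to compare against except those external sources.

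Your proposal is essentially the standard proof (and is in spirit the Frougny--Sakarovitch argument). The pumping step is correct: if some accepting run of $T$ contained a cycle $p\to p$ with non-zero net lead, iterating it would yield pairs in $R$ with unbounded $\bigl||u|-|v|\bigr|$, contradicting the hypothesis. In fact this shows more than you claim: \emph{every} accepting run has all its cycles balanced, so the lead is determined by the current state, and hence is bounded by $(|Q|-1)$ times the maximum per-transition lead change. Your weaker phrasing (``admits an accepting computation'') is harmless, but you may as well state the stronger conclusion since that is what your argument gives and it simplifies the construction of $T'$ (no need to single out a particular run).

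The construction of $T'$ is sketched a little loosely; to make it airtight you should say explicitly that after each synchronous letter $T'$ advances the simulated $T$ (non-deterministically, mirroring $T$'s non-determinism) until one of the two buffers is empty, and observe that the bounded-lead property guarantees the other buffer then holds at most $K$ symbols, so the state set is finite. The reversal argument for $R\lpad$ is clean and correct: $(u,v)\lpad$ is indeed the letterwise reversal of $(\rev(u),\rev(v))\rpad$, and both rationality of relations and regularity of languages are closed under reversal.
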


\begin{remark}
When constructing transducers to recognize particular relations, we
will make use of certain strategies.

One strategy will be to consider a transducer reading elements of a
relation $R$ from \emph{right to left}, instead of (as usual) left to
right. In effect, such a transducer recognizes the reverse of $R$,
which is the relation
\[
R^\rev = \{(u^\rev,v^\rev) : (u,v) \in R\},
\] 
where $u^\rev$ and $v^\rev$ are the reverses of the words $u$ and $v$
respectively. Since the class of rational relations is closed under
reversal \cite[p.65--66]{berstel_transductions}, constructing such a
(right-to-left) transducer suffices to show that $R$ is a rational
relation.

Another important strategy will be for the transducer to
non-deterministically guess some symbol yet to be read. More exactly,
the transducer will non-deterministically select a symbol and store it
in its state. When it later reads the relevant symbol, it checks it
against the stored guessed symbol. If the guess was correct, the
transducer continues. If the guess was wrong, the transducer enters a
failure state. Similarly, the transducer can non-deterministically
guess that it has reached the end of its input and enter an accept
state. If it subsequently reads another symbol, it knows that its
guess was wrong, and it enters a failure state.
\end{remark}

\section{Complete rewriting system \& Gr\"{o}bner--Shirshov basis}
\label{sec_CRS}

The aim of this section is to construct a finite complete rewriting
system for $M_n$ and so deduce the existence of a finite
Gr\"{o}bner--Shirshov basis for the the corresponding Plactic algebra.

The following lemma will play a crucial role in defining the
rewriting system:

\begin{lemma}
\label{lem:incompcols}
Suppose $\alpha$ and $\beta$ are columns with $\alpha \not\succeq
\beta$. Then $P(\alpha\beta)$ contains at most two
columns. Furthermore, if $P(\alpha\beta)$ contains exactly two
columns, the left column contains more symbols than $\alpha$.
\end{lemma}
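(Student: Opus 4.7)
The plan is to apply \fullref{Theorem}{thm:schensted} twice. Write $\alpha = \alpha_k\cdots\alpha_1$ and $\beta = \beta_l\cdots\beta_1$. By Schensted's theorem, the number of columns of $P(\alpha\beta)$ is the length of the longest non-decreasing subsequence of $\alpha\beta$, and the number of rows (which coincides with the height of the leftmost column, since in a tableau the leftmost column is the tallest) is the length of the longest strictly decreasing subsequence of $\alpha\beta$. So the whole lemma reduces to two statements about subsequences of the word $\alpha\beta$.

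For the first claim I would argue that, because $\alpha$ and $\beta$ are each strictly decreasing, any non-decreasing subsequence of $\alpha\beta$ can use at most one symbol from $\alpha$ and at most one from $\beta$. The longest such subsequence therefore has length at most $2$, and so $P(\alpha\beta)$ has at most two columns. This step is immediate and requires no further combinatorial input.

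For the second claim I need to exhibit a strictly decreasing subsequence of $\alpha\beta$ of length at least $k+1$, using the hypothesis $\alpha \not\succeq \beta$. I would split into two cases according to which clause of the definition of $\succeq$ fails. If $l > k$, then $\beta$ itself already provides a strictly decreasing subsequence of length $l > k$. Otherwise $l \leq k$ and there is some least index $j \leq l$ with $\alpha_j > \beta_j$, and I would take the subsequence
\[
\alpha_k,\,\alpha_{k-1},\,\ldots,\,\alpha_j,\,\beta_j,\,\beta_{j-1},\,\ldots,\,\beta_1,
\]
of length $(k-j+1)+j = k+1$. Its $\alpha$-part and $\beta$-part are strictly decreasing because they are subwords of columns, and the junction $\alpha_j > \beta_j$ is precisely the inequality supplied by $\alpha \not\succeq \beta$. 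So this subsequence is strictly decreasing, forcing the leftmost column of $P(\alpha\beta)$ to contain more than $k$ symbols.

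The main (and really only) obstacle is spotting the right subsequence in this second case: one must see that the correct thing to do is to chain a tail of $\alpha$ with a tail of $\beta$ precisely at the ``bad'' index $j$. Once that combinatorial idea is in hand, the strict inequalities follow mechanically from the definitions of a column and of $\succeq$, and no appeal to Schensted's insertion algorithm itself is required beyond the statement of \fullref{Theorem}{thm:schensted}.
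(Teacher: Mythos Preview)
Your argument is correct and follows essentially the same route as the paper's own proof: both parts are reduced to \fullref{Theorem}{thm:schensted}, the first by observing that a non-decreasing subsequence meets each column in at most one symbol, and the second by producing a strictly decreasing subsequence of length $>k$ via the case split $l>k$ versus $\alpha_j>\beta_j$, using exactly the concatenated subsequence $\alpha_k\cdots\alpha_j\beta_j\cdots\beta_1$. (Your choice of the \emph{least} such $j$ is harmless but unnecessary; any index witnessing $\alpha_j>\beta_j$ works.)
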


\begin{proof}
Since $\alpha$ and $\beta$ are strictly decreasing, the longest
non-decreasing sequence in $\alpha\beta$ is at most $2$, since it can
contain at most one symbol from each of $\alpha$ and $\beta$. (It may
have length $1$ if every symbol in $\beta$ is less than the minimum
symbol in $\alpha$.) Hence by \fullref{Theorem}{thm:schensted},
$P(\alpha\beta)$ contains at most two columns.

\begin{figure}[t]
\centering
\begin{tikzpicture}
\matrix (notrelated) [matrix of math nodes,nodes={rectangle,draw,minimum width=6mm,minimum height=6mm},row sep={between borders,-\pgflinewidth},column sep={between borders,-\pgflinewidth}]
{
  & 6 \\
4 & 5 \\
2 & 3 \\
1 & 1 \\
};
\draw[->,densely dotted,line width=.7pt] ($(notrelated-1-2.north)+(2mm,3mm)$) -- ($(notrelated-4-2.south)+(2mm,-3mm)$);
\end{tikzpicture}
\qquad
\begin{tikzpicture}[line width=.5pt]
\matrix (notrelated) [matrix of math nodes,nodes={rectangle,draw,minimum width=6mm,minimum height=6mm},row sep={between borders,-\pgflinewidth},column sep={between borders,-\pgflinewidth}]
{
6 &  \\
4 & 5 \\
3 & 2 \\
1 & 1 \\
};
\draw[->,densely dotted,line width=.7pt] ($(notrelated-1-1.north)+(2mm,3mm)$) -- ($(notrelated-3-1.north)+(2mm,-1mm)$) -- ($(notrelated-3-2.north)+(2mm,-1mm)$) -- ($(notrelated-4-2.south)+(2mm,-3mm)$);
\end{tikzpicture}
\caption{Two columns not related by $\succeq$ always contain a decreasing
  subsequence of length greater than the left column, as indicated by
  the dotted arrows.}
\label{fig:incompcols}
\end{figure}
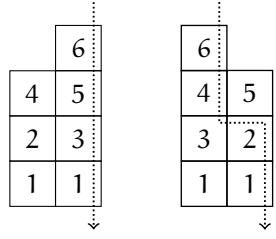

Suppose that $P(\alpha\beta)$ contains exactly two columns. Let
$\alpha = \alpha_k\cdots\alpha_1$ and $\beta = \beta_l\cdots\beta_1$. Then
since $\alpha \not\succeq \beta$, either $k < l$ or $\alpha_i > \beta_i$
for some $i \leq l$, as in the examples in
\fullref{Figure}{fig:incompcols}. In the first case, $\beta$ is a
decreasing subsequence of $\alpha\beta$ containing more symbols than
$\alpha$. In the second case,
$\alpha_k\cdots\alpha_i\beta_i\cdots\beta_1$ is a decreasing
subsequence of $\alpha\beta$ of length $k+1$ and hence contains more
symbols than $\alpha$. In either case, $\alpha\beta$ contains a
decreasing sequence of length greater than $\alpha$, and so by
\fullref{Theorem}{thm:schensted}, $P(\alpha\beta)$ contains more rows
than there are symbols in $\alpha$, and hence the left column of
$P(\alpha\beta)$ contains more symbols than $\alpha$.
\end{proof}

To construct a finite complete rewriting system presenting $M_n$,
introduce a new set of generators. Let
\[
C = \{c_\alpha : \text{$\alpha \in A^+$ is a column}\}
\]
The idea is that each symbol $c_\alpha$ represents the element
$\alpha$ of $M_n$. Thus the symbols $c_1,c_2,\ldots,c_n$ represent the original
generating set for $M_n$, and so the set $C$ also generates
$M_n$. Furthermore, since the set of columns is finite (since a
strictly decreasing sequence of elements of $A$ has length at most
$|A|$), the set $C$ is finite. Notice that $M_n$ is presented by
$\pres{C}{\rel{R}' \cup \rel{S}}$, where
\begin{align*}
\rel{R}' ={}& \{(c_xc_zc_y,c_zc_xc_y) : x,y,z \in A \land x \leq y < z\} \\
&\cup \{(c_yc_xc_z,c_yc_zc_x) : x,y,z \in A \land x < y \leq z\} \\
\rel{S} ={}& \{(c_{\alpha_k\cdots\alpha_1},c_{\alpha_k}\cdots c_{\alpha_1}) : \text{$\alpha_k\cdots\alpha_1$ is a column}\};
\end{align*}
the relations $\rel{R}'$ are simply those in $\rel{R}$ expressed using
the symbols $c_x$ (where $x \in A$), and those in $\rel{S}$ define the extra
generators $C_\alpha$ where $|\alpha| \geq 2$.

Define a set of rewriting rules $\rel{T}$ on $C^*$ as follows:
\begin{align}
\rel{T} ={}& \bigl\{c_\alpha c_\beta \imreduces c_\gamma : \alpha \not\succeq \beta \land \text{$P(\alpha\beta)$ consists of one column $\gamma$}\bigr\} \label{eq:ronecol}\\
\cup{}&\bigl\{c_\alpha c_\beta \imreduces c_\gamma c_\delta : \alpha \not\succeq \beta \land \label{eq:rtwocol}\\
&\qquad\qquad \text{$P(\alpha\beta)$ consists of two columns, left col.~$\gamma$ and right col.~$\delta$}\bigr\}  \nonumber
\end{align}
Notice that every rule in $\rel{T}$ holds in the monoid $M_n$: this
follows from the facts that $c_\zeta =_{M_n} \zeta$ for any column
$\zeta$, that $u =_{M_n} P(u)$ for all $u \in A^*$, and that $C(w)
=_{M_n} w$ for all tableau $w$. For type \eqref{eq:ronecol} rules,
$c_\alpha c_\beta =_{M_n} \alpha\beta =_{M_n} P(\alpha\beta) =_{M_n}
C(P(\alpha\beta)) = \gamma =_{M_n} c_\gamma$; for type
\eqref{eq:rtwocol} rules, $c_\alpha c_\beta =_{M_n} \alpha\beta
=_{M_n} P(\alpha\beta) =_{M_n} C(P(\alpha\beta)) = \gamma\delta
=_{M_n} c_\gamma c_\delta$. Thus every rule in $\rel{T}$ is a
consequence of the relations in $\rel{R}' \cup \rel{S}$.

Notice further that by $k-1$ applications of type \eqref{eq:ronecol}
rules, one can deduce every relation $(c_{\alpha_k\cdots\alpha_1},
c_{\alpha_k}\cdots c_{\alpha_1})$. Finally, it is easy to see that
every relation in $\rel{R}'$ is also a consequence of those in
$\rel{T}$. Thus $M_n$ is presented by $\pres{C}{\rel{T}}$. It remains
to show that $(C,\rel{T})$ is a finite complete rewriting system.

By \fullref{Lemma}{lem:incompcols}, if $\alpha \not\succeq \beta$, then
$P(\alpha\beta)$ has at most two columns. Hence $\rel{T}$ contains a
rewriting rule with left-hand side $c_\alpha c_\beta$ whenever $\alpha
\not\succeq \beta$. Furthermore, since $P(\alpha\beta)$ is uniquely
determined, $\rel{T}$ contains \emph{exactly} one such rewriting rule,
and hence the number of rules in $\rel{T}$ is finite.

\begin{lemma}
\label{lem:placticnoetherian}
The rewriting system $(C,\rel{T})$ is noetherian.
\end{lemma}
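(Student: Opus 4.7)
The plan is to exhibit a well-founded measure on $C^*$ that strictly decreases under every application of a rule in $\rel{T}$. For a word $w = c_{\alpha_1}c_{\alpha_2}\cdots c_{\alpha_m} \in C^*$, I would take
\[
\phi(w) = (n-|\alpha_1|,\,n-|\alpha_2|,\,\ldots,\,n-|\alpha_m|)
\]
and compare such tuples lexicographically, padding the shorter of two tuples on the right with a fixed constant below $0$ so that differing lengths can be compared coherently.

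The key invariant is the total symbol count $N(w) := \sum_{i=1}^{m}|\alpha_i|$. Since $|P(u)| = |u|$ for every $u \in A^*$, a rule of type~\eqref{eq:ronecol} replaces $c_{\alpha_i}c_{\alpha_{i+1}}$ by a single generator $c_\gamma$ with $|\gamma| = |\alpha_i|+|\alpha_{i+1}|$, and a rule of type~\eqref{eq:rtwocol} replaces it by $c_\gamma c_\delta$ with $|\gamma|+|\delta| = |\alpha_i|+|\alpha_{i+1}|$; in both cases $N$ is preserved. Starting from any fixed word $w_0$, every descendant $w$ therefore satisfies $N(w) = N(w_0)$, so $\phi(w)$ is a tuple of length at most $N(w_0)$ with entries drawn from $\{0,1,\ldots,n-1\}$. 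Only finitely many such tuples exist, and the lex order on this finite set is well-founded.

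It remains to verify strict decrease. When a rule is applied at position $i$ the entries of $\phi$ at positions $1,\ldots,i-1$ are untouched. For a rule of type~\eqref{eq:ronecol}, the new $i$-th entry is $n-|\gamma| = n-|\alpha_i|-|\alpha_{i+1}| < n-|\alpha_i|$, since $|\alpha_{i+1}|\geq 1$. For a rule of type~\eqref{eq:rtwocol}, the ``furthermore'' clause of \fullref{Lemma}{lem:incompcols} gives $|\gamma|>|\alpha_i|$, so the $i$-th entry of $\phi$ strictly decreases in that case as well. Thus $\phi$ always decreases lexicographically at the first position altered, which together with the finiteness argument above rules out any infinite rewriting sequence. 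The only non-routine ingredient is the second assertion of \fullref{Lemma}{lem:incompcols}: this is what guarantees strict decrease in the type~\eqref{eq:rtwocol} case, where the length of the word in $C^*$ is unchanged by the rewrite.
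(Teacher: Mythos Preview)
Your proof is correct and follows essentially the same approach as the paper: both arguments order $C^*$ lexicographically by column heights and invoke \fullref{Lemma}{lem:incompcols} to obtain the strict decrease in the type~\eqref{eq:rtwocol} case. The only difference is that the paper uses the length-plus-lexicographic order on $C^*$ induced by an ordering $\sqsubset$ on $C$ with $c_\alpha \sqsubset c_\beta$ whenever $|\alpha|>|\beta|$ (so that type~\eqref{eq:ronecol} rules decrease simply by shortening the word), whereas you use a pure lexicographic comparison on the tuple of values $n-|\alpha_i|$ together with a per-starting-word finiteness argument for well-foundedness; this is a cosmetic variation of the same idea.
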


\begin{proof}
Choose an ordering $\sqsubset$ on $C$ that reverses the partial order
induced by lengths of subscripts, in the sense that $c_\alpha
\sqsubset c_\beta$ whenever $|\alpha| > |\beta|$. (Such an order must
exist: simply reverse the order induces by the length of subscripts
and then arbitrarily order elements with same-length subscripts.)

Let $\ll$ be the length-plus-lexicographic order on $C^*$
induced by $\sqsubset$. That is:
\begin{multline*}
c^{(1)} c^{(2)}\cdots c^{(k)} \ll d^{(1)}d^{(2)}\cdots d^{(l)} \\
\iff k < l \lor \Bigl(k=l \land (\exists i)\bigl(c^{(i)} \sqsubset d^{(i)} \land (\forall j < i)(c^{(j)} = d^{(j)})\bigr)\Bigr),
\end{multline*}
where all symbols $c^{(h)}$ and $d^{(h)}$ lie in $C$. Then $\ll$ is a
well-ordering of $C^*$. The aim is to prove that if $w \imreduces w'$,
then $w' \ll w$.

First, if the rule applied to obtain $w'$ from $w$ is of type
\eqref{eq:ronecol}, then $w = pc_\alpha c_\beta q$ and $w' = pc_\gamma
q$ for some $p,q \in C^*$ and $c_\alpha,c_\beta,c_\gamma \in C$. So
$w'$ is a shorter word than $w$ and so $w' \ll w$.

Second, if the rule applied to obtain $w'$ from $w$ is of type
\eqref{eq:rtwocol}, then $w = pc_\alpha c_\beta q$ and $w' = pc_\gamma
c_\delta q$ for some $p,q \in C^*$ and
$c_\alpha,c_\beta,c_\gamma,c_\delta \in C$ with $P(\alpha\beta)$
having columns $\gamma$ and $\delta$. By
\fullref{Lemma}{lem:incompcols}, $\gamma$ contains more symbols than
$\alpha$; that is, $|\gamma| > |\alpha|$. Hence, $c_\gamma \sqsubset
c_\alpha$ by the choice of $\sqsubset$. So in the definition of $\ll$,
we have $k=l$ and $c^{(i)} = c_{\gamma} \sqsubset c_{\alpha} =
d^{(i)}$ and $c^{(j)} = d^{(j)}$ for all $j < i$ (where $i$ is $|p| +
1$). Hence again $w' \ll w$.

Since $\ll$ is a well-ordering of $C^*$, there are no infinite $\ll$-infinite
descending chains in $C^*$. Thus, since every application of a rule
from $\rel{T}$ yields a $\ll$-preceding word, it follows that any
sequence of rewriting using $\rel{T}$ must terminate. Hence $\rel{T}$
is noetherian.
\end{proof}

\begin{lemma}
\label{lem:placticconfluent}
The rewriting system $(C,\rel{T})$ is confluent.
\end{lemma}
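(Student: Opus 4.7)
The plan is to invoke Lemma \ref{lem:placticnoetherian} (noetherianity) and argue that confluence follows from showing that every word in $C^*$ has a \emph{unique} normal form, since in any noetherian rewriting system uniqueness of normal forms is equivalent to confluence. (Alternatively, one could check local confluence directly via Newman's lemma, examining the critical pairs arising from overlaps on three-column words $c_\alpha c_\beta c_\varepsilon$ with $\alpha\not\succeq\beta$ and $\beta\not\succeq\varepsilon$, but routing the argument through unique normal forms is shorter.)

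First I would characterise the normal forms. A word $w = c_{\zeta_1}c_{\zeta_2}\cdots c_{\zeta_k} \in C^*$ is irreducible under $\rel{T}$ if and only if no pair $c_{\zeta_i}c_{\zeta_{i+1}}$ is the left-hand side of a rule of type \eqref{eq:ronecol} or \eqref{eq:rtwocol}; by construction, this happens precisely when $\zeta_i \succeq \zeta_{i+1}$ for every $i$. Equivalently, the concatenated word $\zeta_1\zeta_2\cdots\zeta_k \in A^*$ is the column reading $C(t)$ of some tableau $t$ whose columns (read left to right) are exactly $\zeta_1,\ldots,\zeta_k$.

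Next I would appeal to the observation, already made in the text preceding the lemma, that every rule in $\rel{T}$ holds in $M_n$, so reducing by $\rel{T}$ preserves the element represented. Consequently, for a normal form $w = c_{\zeta_1}\cdots c_{\zeta_k}$ we have $w =_{M_n} \zeta_1\cdots\zeta_k =_{M_n} t$. Now suppose $u \in C^*$ reduces to two normal forms $w$ and $w'$, with associated tableaux $t$ and $t'$. Then $t =_{M_n} w =_{M_n} u =_{M_n} w' =_{M_n} t'$, and since tableaux form a cross-section of $M_n$ \cite[Theorem~5.2.5]{lothaire_algebraic}, we conclude $t = t'$. Because a tableau uniquely determines its column decomposition, this forces the column sequences, and hence the $C$-words $w$ and $w'$, to coincide.

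Finally I would close the argument by noting that $\rel{T}$ is noetherian (Lemma \ref{lem:placticnoetherian}), so every word reduces to at least one normal form; given two reductions $u \reduces u_1$ and $u \reduces u_2$, further reducing each $u_i$ to a normal form $\hat{u}_i$ and applying the uniqueness result gives $\hat{u}_1 = \hat{u}_2$, completing the confluence diamond. The main obstacle, such as it is, lies in the characterisation of normal forms: one must be sure that the irreducibility conditions $\zeta_i \succeq \zeta_{i+1}$ capture exactly the column readings of tableaux, and that the factorisation of a normal form back into its constituent columns is forced—both of which follow directly from the definitions of $\succeq$ and of $C(\cdot)$.
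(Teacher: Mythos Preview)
Your proposal is correct and follows essentially the same route as the paper: use noetherianity to reduce confluence to uniqueness of normal forms, characterise irreducible words as those with $\zeta_i \succeq \zeta_{i+1}$ throughout, identify each such word with the column reading of a tableau, and then invoke the cross-section property of tableaux in $M_n$ to force any two normal forms of the same word to coincide. The only cosmetic difference is that you spell out the final confluence-diamond step explicitly, whereas the paper leaves it implicit.
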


\begin{proof}
Let $w \in C^*$. Since $(C,\rel{T})$ is noetherian by
\fullref{Lemma}{lem:placticnoetherian}, applying $\rel{T}$ to $w$ will
always eventually yield an irreducible word. Let $w'$ and $w''$ be
irreducible words obtained from $w$. Suppose $w' =
c_{\alpha^{(1)}}c_{\alpha^{(2)}}\cdots c_{\alpha^{(k)}}$. Now, since
$w'$ is irreducible, it does not contain any subword forming a
left-hand side of a rule in $\rel{T}$. That is, there is no $i$ such
that $\alpha^{(i)} \not\succeq \alpha^{(i+1)}$. Equivalently,
$\alpha^{(i)} \succeq \alpha^{(i+1)}$ for all $i$. Thus
$\alpha^{(1)}\alpha^{(2)}\cdots \alpha^{(k)} = C(t')$ for some tableau
$t'$. But $t'$ must be the unique tableau with $t' =_{M_n}
\alpha^{(1)}\alpha^{(2)}\cdots \alpha^{(k)} =_{M_n} w'$. Similarly, if
$w'' = c_{\beta^{(1)}}c_{\beta^{(2)}}\cdots c_{\beta^{(l)}}$ then
$\beta^{(1)}\beta^{(2)}\cdots \beta^{(l)} = C(t'')$, where $t''$ is
the unique tableau with $t'' =_{M_n} \beta^{(1)}\beta^{(2)}\cdots
\beta^{(l)} =_{M_n} w''$. But since $w =_{M_n} w' =_{M_n} w''$, and
tableau form a cross-section of $M_n$, it follows that $t' =
t''$. Hence $k=l$ and $\alpha^{(i)} = \beta^{(i)}$ for all $i =
1,\ldots,k$, and so $w' = w''$. Hence rewriting an arbitrary word $w
\in C^*$ always terminates with a \emph{unique} irreducible word. Thus
the rewriting system $(C,\rel{T})$ is confluent.
\end{proof}

\fullref{Lemmata}{lem:placticnoetherian} and
\ref{lem:placticconfluent}, together with the finiteness of $\rel{T}$, yield the following result:

\begin{theorem}
$(C,\rel{T})$ is a finite complete rewriting system for the Plactic
  monoid $M_n$.
\end{theorem}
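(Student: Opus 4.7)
The plan is to simply assemble the three ingredients that have already been put in place in the lead-up to the theorem. First I would observe that the discussion immediately preceding \fullref{Lemma}{lem:placticnoetherian} already shows $M_n$ is presented by $\pres{C}{\rel{T}}$: every rule in $\rel{T}$ holds in $M_n$ (verified using $c_\zeta =_{M_n} \zeta$ for columns $\zeta$, $u =_{M_n} P(u)$, and $C(w) =_{M_n} w$ for tableaux $w$), and conversely the relations in $\rel{R}' \cup \rel{S}$ — which are known to present $M_n$ over the alphabet $C$ — are all deducible from $\rel{T}$ (the relations in $\rel{S}$ via iterated applications of type \eqref{eq:ronecol} rules, and the relations in $\rel{R}'$ as noted in the text). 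Hence $\pres{C}{\rel{T}}$ is a monoid presentation of $M_n$.

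Next I would argue that $\rel{T}$ is finite. By \fullref{Lemma}{lem:incompcols}, for any pair of columns $(\alpha,\beta)$ with $\alpha \not\succeq \beta$ the tableau $P(\alpha\beta)$ has at most two columns, and it is uniquely determined; so each such pair contributes exactly one rule to $\rel{T}$. Since a strictly decreasing word over $A$ has length at most $n = |A|$, the set of columns is finite, and therefore $\rel{T}$ is finite.

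Finally, completeness of the rewriting system is exactly the conjunction of \fullref{Lemma}{lem:placticnoetherian} (termination) and \fullref{Lemma}{lem:placticconfluent} (confluence). The theorem then follows by combining these three observations.

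The main point to highlight is that there is no real obstacle at this stage: all the substantive work has been carried out in \fullref{Lemma}{lem:incompcols} (bounding the number of columns of $P(\alpha\beta)$ and showing the left column grows), in the construction of $\rel{T}$ (verifying it presents $M_n$), and in \fullref{Lemmata}{lem:placticnoetherian} and~\ref{lem:placticconfluent}. The theorem itself is a short packaging statement, and the proof should be written accordingly — essentially a one-line appeal to the preceding lemmas together with the finiteness observation above.
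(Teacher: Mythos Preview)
Your proposal is correct and matches the paper's approach exactly: the paper itself states the theorem as an immediate consequence of \fullref{Lemmata}{lem:placticnoetherian} and~\ref{lem:placticconfluent} together with the finiteness of $\rel{T}$, with the fact that $\pres{C}{\rel{T}}$ presents $M_n$ having been established in the discussion preceding those lemmas. Your write-up is slightly more detailed than the paper's one-line justification, but the logical content is identical.
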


The following corollary is immediate \cite{squier_finiteness}:

\begin{corollary}
Every Plactic monoid has finite derivation type.
\end{corollary}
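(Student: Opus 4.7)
The plan is essentially to quote Squier's theorem. Recall that Squier showed in \cite{squier_finiteness} that any monoid presented by a finite complete (i.e.\ finite, noetherian, and confluent) string rewriting system has finite derivation type, and that finite derivation type is an invariant of the monoid, independent of the particular finite presentation chosen.

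So the proof would be a single sentence: by the preceding theorem, the Plactic monoid $M_n$ is presented by the finite complete rewriting system $(C,\rel{T})$, and hence by Squier's theorem $M_n$ has finite derivation type. Since $n$ was arbitrary, every Plactic monoid of finite rank has finite derivation type.

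There is really no obstacle here; the only thing to be careful about is that we have genuinely verified all three conditions (finiteness of $\rel{T}$ was observed immediately after its definition, noetherianity is \fullref{Lemma}{lem:placticnoetherian}, and confluence is \fullref{Lemma}{lem:placticconfluent}), so that Squier's hypothesis is met. No further computation is required.
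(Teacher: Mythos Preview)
Your proposal is correct and matches the paper's own treatment exactly: the paper simply states that the corollary is immediate from \cite{squier_finiteness}, which is precisely the invocation of Squier's theorem you describe. Nothing more is needed.
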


By a result originally proved by Anick in different form
\cite{anick_homology}, but also proved by various other authors (see
\cite{cohen_stringrewriting}):

\begin{corollary}
Every Plactic monoid is of type right and left $\mathrm{FP}_\infty$.
\end{corollary}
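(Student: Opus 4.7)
The plan is to deduce this corollary as a one-line consequence of the preceding theorem, since all the substantive work has already been carried out. Specifically, the previous theorem exhibits a finite complete rewriting system $(C,\rel{T})$ presenting the Plactic monoid $M_n$, and it is a classical fact in the homological theory of monoids that any monoid admitting a presentation by a finite complete rewriting system is of type left and right $\mathrm{FP}_\infty$. This is the content of the result originally due to Anick \cite{anick_homology}, which shows how to build, from a finite convergent presentation of $M$, explicit free resolutions of the trivial $\mathbb{Z}M$-module $\mathbb{Z}$ in which every module is finitely generated and free; subsequent reformulations and alternative proofs appear in \cite{cohen_stringrewriting} and elsewhere.

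Therefore my proof consists of the following single step: apply the cited theorem of Anick (in the form given in \cite{cohen_stringrewriting}) to the finite complete rewriting system $(C,\rel{T})$ constructed in the previous theorem. This yields a free resolution of $\mathbb{Z}$ over $\mathbb{Z}M_n$ by finitely generated free modules, establishing left $\mathrm{FP}_\infty$. The symmetric construction, applied to the reversed rewriting system, which remains finite and complete (noetherianness and confluence are preserved under reversal of a rewriting system viewed as presenting the opposite monoid, and $M_n^{\mathrm{op}}$ admits an analogous finite complete system by the same argument), yields right $\mathrm{FP}_\infty$. Alternatively, since Anick's construction is formulated so as to produce free resolutions as both left and right modules simultaneously, a single invocation of the cited result suffices for both sides.

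There is no real obstacle here: the technical heart of the argument was the construction of $(C,\rel{T})$ and the verification of its properties in Lemmata~\ref{lem:placticnoetherian} and~\ref{lem:placticconfluent}. The only mild care required is to ensure that the cited homological finiteness statement is applied in the version that gives the property on both sides, which is standard in the references.
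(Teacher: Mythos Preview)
Your proposal is correct and matches the paper's approach exactly: the corollary is obtained by invoking Anick's theorem (as in \cite{anick_homology,cohen_stringrewriting}) applied to the finite complete rewriting system $(C,\rel{T})$. One small caveat: your aside that noetherianness and confluence are preserved under reversal is not true in general, so that detour is unsafe; fortunately your alternative---that the cited result already yields both left and right $\mathrm{FP}_\infty$ from a single finite complete presentation---is the correct and standard justification, and is precisely what the paper relies on.
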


Now let $K$ be a field. Let $F = \{l - r : (l \imreduces r) \in
\rel{T}\} \subset K[C^*]$. Then the semigroup algebra $K[{M_n}]$ is isomorphic to the factor
algebra $K[C^*]/\!\gen{F}$ (where $\gen{F}$ is the ideal generated by
$F$) \cite[Proposition on p.~1]{heyworth_rewriting}. Since $(C,\rel{T})$ is a
finite complete rewriting system, $F$ is a finite Gr\"{o}bner--Shirshov basis
for $K[{M_n}]$ \cite[Theorem on p.~1]{heyworth_rewriting}. Furthermore, the order
$\ll$ defined in the proof of \fullref{Lemma}{lem:placticnoetherian}
corresponds in $K[C^*]$ to the degree-lexicographic
order. These remarks yield the following result:

\begin{theorem}
A Plactic algebra of arbitrary finite rank over an arbitrary field
admits a finite Gr\"{o}bner--Shirshov basis over $C$ with respect to
degree-lexicographic order.
\end{theorem}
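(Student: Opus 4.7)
The plan is to deduce this theorem directly from the finite complete rewriting system $(C,\rel{T})$ constructed in the previous results, invoking the well-known correspondence between finite complete rewriting systems and finite Gr\"{o}bner--Shirshov bases. All of the hard combinatorial work (showing termination via \fullref{Lemma}{lem:placticnoetherian} and confluence via \fullref{Lemma}{lem:placticconfluent}) has already been carried out, so this theorem should really be a packaging step.

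First, I would fix a field $K$ and form the monoid algebra $K[C^*]$ (the free associative $K$-algebra on the finite alphabet $C$). To each rule $(l \imreduces r) \in \rel{T}$ I associate the element $l - r \in K[C^*]$, obtaining a finite set $F \subset K[C^*]$. Using the fact that $M_n$ is presented by $\pres{C}{\rel{T}}$ (established in the discussion preceding \fullref{Lemma}{lem:placticnoetherian}), I would note that the semigroup algebra $K[M_n]$ is isomorphic to the factor algebra $K[C^*]/\gen{F}$, where $\gen{F}$ denotes the two-sided ideal generated by $F$; this is the standard identification used in \cite{heyworth_rewriting}.

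Next, I would recall the correspondence theorem of Heyworth \cite{heyworth_rewriting}: for a monoid $M$ presented by a complete rewriting system $(X,\rel{U})$ compatible with a given admissible order on $X^*$, the set $\{l-r : (l\imreduces r) \in \rel{U}\}$ is a Gr\"{o}bner--Shirshov basis for the corresponding monoid algebra with respect to that order. Since $(C,\rel{T})$ is finite, noetherian, and confluent, $F$ is thus a finite Gr\"{o}bner--Shirshov basis for $K[M_n]$. The only remaining point is to check that the order $\ll$ used in \fullref{Lemma}{lem:placticnoetherian} is indeed the pullback of the degree-lexicographic order on $K[C^*]$ induced by $\sqsubset$; this is immediate from the definition of $\ll$ as length-plus-lexicographic with respect to $\sqsubset$.

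The main potential obstacle would be verifying that the order $\ll$ really satisfies the axioms required of a monomial order in the Gr\"{o}bner--Shirshov setting (admissibility, i.e.\ compatibility with left and right multiplication, together with being a well-ordering). Well-orderedness was already argued in the proof of \fullref{Lemma}{lem:placticnoetherian}, and admissibility follows because both the length component and the lexicographic tiebreaker are preserved under concatenation on either side. Apart from this bookkeeping, no further combinatorial input is needed, so the theorem falls out as a direct corollary of the rewriting system constructed in Section~\ref{sec_CRS}.
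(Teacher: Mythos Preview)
Your proposal is correct and follows essentially the same approach as the paper: define $F = \{l-r : (l\imreduces r)\in\rel{T}\}$, identify $K[M_n]$ with $K[C^*]/\gen{F}$, invoke Heyworth's correspondence to deduce that $F$ is a finite Gr\"{o}bner--Shirshov basis, and observe that the order $\ll$ from \fullref{Lemma}{lem:placticnoetherian} is precisely degree-lexicographic. The paper's version is terser and does not spell out the admissibility check you flag, but the argument is the same.
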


\section{Biautomaticity}

The aim of this section is to prove that the Plactic monoid $M_n$ is
biautomatic. We will prove biautomaticity with respect to the usual
generating set $A$, but we will initially work with the generating set
$C$. The first step is to define a language of representatives over
$C$.

Let
\[
K = \bigl\{c_{\alpha^{(1)}}c_{\alpha^{(2)}}\cdots c_{\alpha^{(k)}} : k \in \nset \cup \{0\}, c_{\alpha^{(i)}} \in C, {\alpha^{(j)}} \succeq {\alpha^{(j+1)}} \text{ for all $j$}\bigr\}.
\]
Notice that for any
$c_{\alpha^{(1)}},c_{\alpha^{(2)}},\ldots,c_{\alpha^{(k)}} \in C$, we
have $c_{\alpha^{(1)}}c_{\alpha^{(2)}}\cdots c_{\alpha^{(k)}} \in K$
if and only if $\alpha^{(1)}\;\alpha^{(2)}\;\cdots \alpha^{(k)}$ is
the column reading of the corresponding tableau (that is,
$\alpha^{(1)}\;\alpha^{(2)}\;\cdots\;\alpha^{(k)} =
C(P(\alpha^{(1)}\alpha^{(2)}\cdots\alpha^{(k)}))$. Then $K$ is a
regular language over $C$, since an automaton need only store the
previously-read symbol in its state in order to check that
${\alpha^{(j)}} \succeq {\alpha^{(j+1)}}$. Actually, $K$ is the
language of normal forms for the rewriting system $(C,\rel{T})$
\cite[Lemma~2.1.3]{book_srs}. Duchamp \& Krob
\cite[\S~3.2]{duchamp_plactic} noted that this language $K$ is a
regular cross-section of the Plactic monoid, although their definition
of $K$ is rather different.

\subsection{Right-multiplication by transducer}

We will first of all prove that for any $\gamma \in A$ the relation
$K_{c_{\gamma}}$ is recognized by a finite transducer.

We imagine a transducer reading a pair of words
\[
(c_{\alpha^{(1)}}\cdots c_{\alpha^{(k)}},c_{\beta^{(1)}}\cdots c_{\beta^{(l)}}) \in K \times K
\]
\emph{from right to left}, with the aim of checking whether this pair
is in $K_{c_{\gamma}}$. It is easiest to describe the transducer as
reading symbols from the left tape and outputting symbols on the right
tape.  Essentially, the transducer will perform Schensted's algorithm
using the alphabet $C$ as a column representation of the tableau.

The transducer non-deterministically looks one symbol ahead (that is,
further left) on the input tape. In its state, it stores a symbol
$\eta$ from $A$ and a counter $m$ which can take any value from
$\{1,\ldots,n,\infty\}$. Initially, $\eta$ is set to be $\gamma$ and
$m$ is $1$, corresponding to the bottom row of the tableau. The idea
is that when $m \neq \infty$, the transducer is looking for the
correct column in which to insert $\eta$ in row $m$. Following
Schensted's algorithm, the transducer will know if it has found the
correct column $c_{\alpha^{(i)}}$ if the $m$-th symbol from the bottom
of $\alpha^{(i)}$ is greater than $\gamma$ and the $m$-th symbol from
the bottom of $\alpha^{(i-1)}$ is less than or equal to $\gamma$. The
crucial observation is that the transducer only needs a single
right-to-left pass because when a symbol $\eta$ is bumped, it is
inserted into the next row either in the same column or in the some
column further to the left, as was shown in
\fullref{Figure}{fig:bump}. When $m = \infty$, the transducer has
completed the algorithm and simply reads symbols from the input tape
and writes them on the output tape.

Initially, the transducer has $m = 1$, $\eta = \gamma$, and
non-deterministically knows $c_{\alpha^{(k)}}$. If the bottom symbol of
$\alpha^{(k)}$ is less than or equal to $\eta = \gamma$, then the
transducer outputs $c_{\gamma}$ before reading any input and then sets
$m = \infty$.

When reading a symbol $c_{\alpha^{(i)}}$, the transducer
non-deterministically knows $c_{\alpha^{(i-1)}}$ (or
non-deterministically guesses that it has reached
$c_{\alpha^{(1)}}$). As seen before, this is sufficient information to
check whether the symbol $\eta$ should be inserted into the column
$\alpha^{(i)}$ at row $m$ (bumping the $m$-th symbol from the bottom
of $\alpha^{(i)}$). If such an insertion and bump is carried out, $m$
is incremented by $1$ and $\eta$ replaced by the bumped symbol. The
transducer may have to carry out several such insertions and bumps
within the same column, but since there are only finitely many
possibilities for $c_{\alpha^{(i)}}$, $c_{\alpha^{(i-1)}}$, $m$, and
$\eta$, the result of carrying out all the necessary insertions and
bumps can be stored in a finite lookup table. The transition function
of the transducer can then be defined using this lookup table. Thus the
transducer can calculate the value of the resulting column $\beta$ and
output $c_\beta$. If no such insertion and bumping is carried out, the
transducer simply outputs $c_{\alpha^{(i)}}$.

Notice that when the transducer reads $c_{\alpha^{(i)}}$ and bumps
symbols it may increment $m$ to $|c_{\alpha^{(i)}}| + 1$. In this
case, the transducer must insert $\eta$ at the end of the $m$-th
row, which corresponds to finding the first (rightmost) symbol
$c_{\alpha^{(j)}}$ such that $|c_{\alpha^{(j-1)}}| \geq m$, and adding
$\eta$ to the top $\alpha^{(j)}$ to calculate the column $\beta$ and
output $c_\beta$. If the transducer reaches the leftmost end of the
input word without finding such an $c_{\alpha^{(j)}}$, the $m$-th row
is empty and so the symbol $\eta$ is added to the top of
$\alpha^{(1)}$. When a symbol is added to the top of some
$c_{\alpha^{(j)}}$, the transducer has completed the algorithm and
sets $m=\infty$.

Since it is recognized by a finite transducer, $L_{c_\gamma}$ is a rational
relation.

\subsection{Left-multiplication by transducer}

To prove that the relation ${}_{c_{\gamma}}K = \{(u,v) : u,v \in K,
c_\gamma u =_{M_n} v\}$ is recognized by a finite transducer whenever
$|\gamma| = 1$, we start with the following lemma, which is a
straightforward consequence of Schensted's algorithm:

\begin{lemma}
\label{lem:leftmultcolumns}
Let $\gamma \in A$ and let $\alpha = \alpha_p \cdots \alpha_1$ (where
$\alpha_i \in A$) be a column. Then
\begin{enumerate}
\item $\gamma > \alpha_p$ if and only if $P(\gamma\alpha)$ is a single
  column $\gamma\alpha_p\cdots\alpha_1$.
\item $r$ is minimal with $\gamma \leq \alpha_r$ if and only if
  $P(\gamma\alpha)$ has two columns: left column
  $\alpha_p\cdots\alpha_{r+1}\gamma\alpha_{r-1}\cdots\alpha_1$, and
  right column $\alpha_r$.
\end{enumerate}
\end{lemma}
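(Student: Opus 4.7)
The plan is to prove both parts by direct computation with Schensted's algorithm (\fullref{Algorithm}{alg:schensted}). For part~(1), the forward direction is immediate: if $\gamma > \alpha_p$ then $\gamma\alpha_p\cdots\alpha_1$ is strictly decreasing, hence already a one-column tableau, so $P(\gamma\alpha) = \gamma\alpha_p\cdots\alpha_1$. For the converse I would argue contrapositively: if $\gamma \leq \alpha_p$ then $\gamma,\alpha_p$ is a non-decreasing subsequence of $\gamma\alpha$ of length $2$, so by \fullref{Theorem}{thm:schensted} the tableau $P(\gamma\alpha)$ contains at least two columns and hence cannot be a single column.

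For part~(2), assume $\gamma \leq \alpha_p$ and let $r$ be minimal with $\gamma \leq \alpha_r$; the choice of $r$ forces $\gamma > \alpha_j$ for all $j < r$, and $\alpha_{r+1} > \gamma$ whenever $r < p$. My first step is to pin down the shape of $P(\gamma\alpha)$ via \fullref{Theorem}{thm:schensted}: the longest non-decreasing subsequence in $\gamma\alpha$ has length exactly $2$ (since $\alpha$ is strictly decreasing, at most one of its letters can appear in such a subsequence), and the longest strictly decreasing subsequence has length exactly $p$ (realised by $\alpha$ itself; $\gamma$ cannot extend it, since $\gamma \leq \alpha_p$ and $\gamma$ precedes every letter of $\alpha$ in the word). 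Hence $P(\gamma\alpha)$ has $p$ rows and $2$ columns, forcing its left column to have length $p$ and its right column to be a single cell sitting in the bottom row.

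My second step is to identify the entries by computing $P(\gamma\alpha)$ explicitly. Starting from the one-cell tableau $\gamma$, I iteratively right-insert $\alpha_p,\alpha_{p-1},\ldots,\alpha_1$, and the computation splits naturally into three phases. First, inserting $\alpha_p,\ldots,\alpha_{r+1}$ (each at least $\gamma$) successively fills column~$2$ of the bottom row and cascades the previous column-$2$ occupant upward, producing the intermediate tableau with rows (top to bottom) $\alpha_p,\alpha_{p-1},\ldots,\alpha_{r+2},\gamma\alpha_{r+1}$. Second, inserting $\alpha_r$ bumps $\alpha_{r+1}$ out of the bottom row and the cascade propagates up the single-column stack, yielding rows $\alpha_p,\alpha_{p-1},\ldots,\alpha_{r+1},\gamma\alpha_r$. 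Third, inserting $\alpha_{r-1},\ldots,\alpha_1$ (each strictly less than $\gamma$) successively displaces the leftmost entry of the bottom row upward through the left column. Reading the resulting tableau column-wise then gives left column $\alpha_p\alpha_{p-1}\cdots\alpha_{r+1}\gamma\alpha_{r-1}\cdots\alpha_1$ and right column $\alpha_r$, as required.

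The main obstacle is controlling the cascading bumps through each of the three phases and verifying that the tableau actually passes through the predicted intermediate shapes; this is routine but slightly tedious, and is best handled by induction on the number of insertions performed so far within the phase, using the strict decrease of $\alpha$ and the defining properties of $r$. The degenerate cases $r=1$, $r=p$, and $p=1$ collapse some of the phases but are settled by the same bookkeeping.
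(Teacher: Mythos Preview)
Your argument is correct and is precisely the direct computation with Schensted's algorithm that the paper has in mind; the paper itself omits the proof, stating only that the lemma ``is a straightforward consequence of Schensted's algorithm''. Your shape determination via \fullref{Theorem}{thm:schensted} and the three-phase insertion trace are sound (though phases~1 and~2 behave identically and could be merged, since $\alpha_r \geq \gamma$ just as $\alpha_j > \gamma$ for $j > r$).
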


Notice that if $c_\gamma c_\alpha$ is reducible with respect to the
rewriting system $(C,\rel{T})$, then $c_\gamma c_\alpha$ either
rewrites to a single symbol $c_{\gamma\alpha}$ with $\gamma\alpha
\succ \alpha$ or to a two-symbol word $c_{\alpha'}c_\eta$ with
$\alpha' \succeq \alpha$ and $\gamma \leq \eta$.

\begin{lemma}
\label{lem:leftmultcolumnssucc}
Let $\alpha = \alpha_p\cdots\alpha_1$ and $\beta =
\beta_q\cdots\beta_1$ be columns (where $\alpha_i,\beta_i \in A$) with
$\alpha \succeq \beta$. Let $i \in \{1,\ldots,p\}$, and let $\eta$ be
the left-hand column of $P(\alpha_i\beta)$. Then $\alpha \succeq
\eta$.
\end{lemma}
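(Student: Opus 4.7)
My plan is to apply \fullref{Lemma}{lem:leftmultcolumns} directly, taking $\gamma = \alpha_i$ and inserting into the column $\beta = \beta_q\cdots\beta_1$, and to argue in each of the two cases of that lemma separately.

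In the first case, where $\alpha_i > \beta_q$, the left column $\eta$ of $P(\alpha_i\beta)$ is the single column $\alpha_i\beta_q\cdots\beta_1$, of length $q+1$. To establish $\alpha \succeq \eta$ it will suffice to show $p \geq q+1$ and that comparing bottom-up, $\alpha_j \leq \eta_j$ for all $j \leq q+1$. For $j \leq q$, this is immediate from $\alpha \succeq \beta$ since $\eta_j = \beta_j$. The critical inequality is $\alpha_{q+1} \leq \alpha_i$, which I will derive from $\alpha_i > \beta_q \geq \alpha_q$: since $\alpha$ is strictly increasing in its subscripts, this forces $i \geq q+1$, which simultaneously gives $p \geq i \geq q+1$ and $\alpha_i \geq \alpha_{q+1}$.

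In the second case, where $r$ is minimal with $\alpha_i \leq \beta_r$, the left column is $\eta = \beta_q\cdots\beta_{r+1}\alpha_i\beta_{r-1}\cdots\beta_1$, of length $q$. Since $p \geq q$ already holds, and $\eta_j = \beta_j \geq \alpha_j$ for $j \neq r$, only the inequality $\alpha_r \leq \eta_r = \alpha_i$ requires work. The key step, which I expect to be the main technical point, is to show $r \leq i$. Assuming $r \geq 2$ (the case $r=1$ being trivial since $i \geq 1$), the minimality of $r$ gives $\alpha_i > \beta_{r-1}$, and $\alpha \succeq \beta$ gives $\beta_{r-1} \geq \alpha_{r-1}$. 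Together these yield $\alpha_i > \alpha_{r-1}$, and strict monotonicity of the subscripts of $\alpha$ forces $i \geq r$, as required. Then $\alpha_r \leq \alpha_i$ follows, completing the proof.

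The only mild obstacle is keeping careful track of the indexing conventions (bottom-up subscripts on columns, the position of the bumped symbol, and the direction of monotonicity), but once these are pinned down the argument is a short case analysis driven entirely by \fullref{Lemma}{lem:leftmultcolumns} and the definition of $\succeq$.
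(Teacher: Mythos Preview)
Your proof is correct and follows essentially the same approach as the paper: apply \fullref{Lemma}{lem:leftmultcolumns} and split into the one-column and two-column cases, in each case verifying the length and entrywise inequalities defining $\succeq$. The only minor difference is in how you establish $r \leq i$ in the two-column case: the paper argues by contradiction using $\alpha_i \leq \beta_i$ (implicitly assuming $i \leq q$, the case $i > q$ being trivial since $r \leq q$), whereas you use minimality of $r$ to get $\alpha_i > \beta_{r-1} \geq \alpha_{r-1}$ directly, which is slightly cleaner and avoids that implicit case split.
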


\begin{proof}
Since $\alpha \succeq \beta$, it follows that $p \geq q$ and $\alpha_j
\leq \beta_j$ for all $j \leq q$. We distinguish two cases:
\begin{enumerate}

\item Suppose $P(\alpha_i\beta)$ has two columns. Then $\eta$ has the
  form $\beta_q\cdots \beta_{r+1}\alpha_i\beta_{r-1}\cdots \beta_1$,
  where $r$ is minimal with $\alpha_i \leq \beta_r$. So $|\eta| =
  |\beta|$ and thus $|\alpha| \geq |\eta|$. Notice that $r \leq i$,
  since otherwise we would have $\alpha_i \leq \beta_i < \beta_r$,
  contradicting the minimality of $r$. Therefore $\alpha_r \leq \alpha_i$ and
  for all $j \leq q$ with $j \neq r$ we have $\alpha_j \leq
  \beta_j$. Thus $\alpha \succeq \eta$.

\item Suppose $P(\alpha_i\beta)$ has one column (namely $\eta$). By
  \fullref{Lemma}{lem:leftmultcolumns} we have $\alpha_i >
  \beta_q$. Since $\beta_q \geq \alpha_q$ and $\alpha$ is a column, we
  conclude $i > q$. Thus $p > q$. Therefore $\eta$ has the
  form $\alpha_i\beta_q\cdots\beta_1$ and $\alpha =
  \alpha_p\cdots\alpha_i\cdots\alpha_{q+1}\alpha_q\cdots\alpha_1$. Since
  $\alpha_{q+1}\leq\alpha_i$, it follows that $\alpha \succeq \eta$.\qedhere

\end{enumerate}
\end{proof}

\begin{lemma}
\label{lem:leftmultrewrite}
Let $\gamma \in A$ and $c_\alpha,c_\beta \in C$ with $\alpha \succeq \beta$.
\begin{enumerate}
\item If $c_\gamma c_\alpha c_\beta\imreduces c_{\alpha'}c_\eta c_\beta \imreduces c_{\alpha'}c_{\beta'}c_\zeta$, then $\alpha'\succeq\beta'$.
\item If $c_\gamma c_\alpha c_\beta\imreduces c_{\alpha'}c_\eta c_\beta \imreduces c_{\alpha'}c_{\beta'}$, then $\alpha'\succeq\beta'$.
\item If $c_\gamma c_\alpha c_\beta\imreduces c_{\alpha'}c_\beta$, then $\alpha'\succeq\beta$.
\end{enumerate}
\end{lemma}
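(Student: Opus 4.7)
The plan is to resolve each of the three parts by directly unpacking the two applications of \fullref{Lemma}{lem:leftmultcolumns} that govern each reduction, and then performing an index-by-index comparison at the level of column symbols.

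Case~3 should be handled first as a warm-up. There, $c_\gamma c_\alpha\imreduces c_{\alpha'}$ is exactly the one-column outcome of \fullref{Lemma}{lem:leftmultcolumns}(1), so $\alpha'=\gamma\alpha_p\cdots\alpha_1$ is obtained from $\alpha$ simply by prepending $\gamma$ on top. Hence $\alpha'\succeq\alpha$, and transitivity of $\succeq$ with the hypothesis $\alpha\succeq\beta$ immediately gives $\alpha'\succeq\beta$.

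For Cases~1 and~2, the common first step $c_\gamma c_\alpha\imreduces c_{\alpha'}c_\eta$ is the two-column outcome of \fullref{Lemma}{lem:leftmultcolumns}(2): letting $r$ be minimal with $\gamma\le\alpha_r$, one has $\alpha'=\alpha_p\cdots\alpha_{r+1}\gamma\alpha_{r-1}\cdots\alpha_1$ and the crucial fact that $\eta=\alpha_r$ is a \emph{single} symbol. Thus the second step $c_\eta c_\beta\imreduces\cdots$ is again a single-symbol-times-column multiplication to which \fullref{Lemma}{lem:leftmultcolumns} applies. In Case~2, $P(\eta\beta)$ is the single column $\beta'=\alpha_r\beta_q\cdots\beta_1$; here the length condition $p\ge q+1$ follows from observing that $\alpha_r=\eta>\beta_q\ge\alpha_q$ forces $r>q$ (by the strict monotonicity of $\alpha$), and the symbol-wise inequalities $\alpha'_i\le\beta'_i$ for $i\le q+1$ then split into the subcases $i<r$, $i=r\le q$, and $i=q+1$, each handled using $\gamma\le\alpha_r$, $\alpha\succeq\beta$, and the strict monotonicity of $\alpha$.

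Case~1 is the technical heart. Here \fullref{Lemma}{lem:leftmultcolumns}(2) gives $\beta'=\beta_q\cdots\beta_{s+1}\alpha_r\beta_{s-1}\cdots\beta_1$ with $s$ minimal such that $\alpha_r\le\beta_s$, so both $\alpha'$ and $\beta'$ differ from $\alpha$ and $\beta$ only in a single position ($r$ and $s$ respectively). The length condition $p\ge q$ is immediate from $\alpha\succeq\beta$. For the symbol-wise comparison at each index $i\le q$, I split into four subcases depending on whether $i$ equals $r$, equals $s$, both, or neither. The subcases $i\notin\{r,s\}$ and $i=r=s$ follow directly from $\alpha\succeq\beta$ and $\gamma\le\alpha_r$; the subcase $i=r\ne s$ reduces to $\gamma\le\alpha_r\le\beta_r$. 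The \emph{main obstacle} is the remaining subcase $i=s\ne r$, which requires proving the key inequality $s\le r$: if $s>1$, then minimality of $s$ gives $\beta_{s-1}<\alpha_r$, and $\alpha_{s-1}\le\beta_{s-1}<\alpha_r$ together with strict monotonicity of $\alpha$ forces $s-1<r$. Once $s\le r$ is established, $\alpha_s\le\alpha_r=\beta'_s$ closes this subcase, and all four parts of Case~1 are verified, completing the proof.
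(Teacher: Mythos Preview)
Your argument is correct, but it takes a more laborious route than the paper. The paper's proof is essentially two lines for each part: from \fullref{Lemma}{lem:leftmultcolumns} and the remark following it, one has $\alpha'\succeq\alpha$ (since $\alpha'$ agrees with $\alpha$ except that $\alpha_r$ is replaced by $\gamma\le\alpha_r$, or $\gamma$ is prepended on top), and $\eta=\alpha_r$ is a symbol of $\alpha$; then the separately-stated \fullref{Lemma}{lem:leftmultcolumnssucc} immediately yields $\alpha\succeq\beta'$, and transitivity finishes. What you do instead is inline the proof of \fullref{Lemma}{lem:leftmultcolumnssucc} directly into Cases~1 and~2 --- your index-by-index analysis and the key inequality $s\le r$ are exactly the content of that lemma's two-column case, and your Case~2 is its one-column case. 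The paper's factoring is cleaner and avoids the four-way case split; your approach is self-contained but repeats work. One small slip: in your Case~2 subcase list, ``$i=r\le q$'' is vacuous since you have already established $r>q$; the relevant split at $i=q+1$ is simply whether $q+1=r$ (giving $\alpha'_{q+1}=\gamma\le\alpha_r$) or $q+1<r$ (giving $\alpha'_{q+1}=\alpha_{q+1}\le\alpha_r$).
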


\begin{proof}
\begin{enumerate}
\item From \fullref{Lemma}{lem:leftmultcolumns} and the remarks following it, we know that $\alpha' \succeq \alpha$ and $\eta$ is a letter from $\alpha$. Therefore, by \fullref{Lemma}{lem:leftmultcolumnssucc}, $\alpha \succeq \beta'$. Hence, since $\succeq$ is transitive, we have $\alpha' \succeq\beta'$.
\item The reasoning is the same as part~1.
\item From \fullref{Lemma}{lem:leftmultcolumns} and the remarks following it, we know that $\alpha' \succeq \alpha$. Since $\alpha \succeq \beta$ and $\succeq$ is transitive, we have $\alpha' \succeq \beta$.\qedhere
\end{enumerate}
\end{proof}

Let $c_{\alpha^{(1)}}\cdots c_{\alpha^{(k)}} \in K$. Recall that
${\alpha^{(1)}} \succeq \cdots \succeq {\alpha^{(k)}}$. Consider
rewriting the word $c_\gamma c_{\alpha^{(1)}}\cdots c_{\alpha^{(k)}}$
to normal form using rules in $\rel{T}$. Suppose the rewriting
proceeds as follows:
\begin{align*}
& c_\gamma c_{\alpha^{(1)}}c_{\alpha^{(2)}}\cdots c_{\alpha^{(k)}} \\
\imreduces{}& c_{\alpha'^{(1)}}c_{\gamma_1}c_{\alpha^{(2)}}\cdots c_{\alpha^{(k)}} \\
\imreduces{}& \ldots \\
\imreduces{}& c_{\alpha'^{(1)}}c_{\alpha'^{(2)}}\cdots c_{\alpha'^{(i)}}c_{\gamma_i}c_{\alpha^{(i+1)}}\cdots c_{\alpha^{(k)}}.
\end{align*}
By \fullref{Lemma}{lem:leftmultrewrite}, ${\alpha'^{(1)}} \succeq \cdots
\succeq {\alpha'^{(i)}}$. If $i = k$, then ${\alpha'^{(i)}} \succeq
{\gamma_i}$ by the definition of $\rel{T}$. Suppose rewriting
continues as follows:
\begin{align*}
& \ldots \\
\imreduces{}& c_{\alpha'^{(1)}}\cdots c_{\alpha'^{(j)}}c_{\gamma_j}c_{\alpha^{(j+1)}}c_{\alpha^{(j+2)}}\cdots c_{\alpha^{(k)}} \\
\imreduces{}& c_{\alpha'^{(1)}}\cdots c_{\alpha'^{(j)}}c_{\alpha'^{(j+1)}}c_{\alpha^{(j+2)}}\cdots c_{\alpha^{(k)}}.
\end{align*}
By \fullref{Lemma}{lem:leftmultrewrite}, $c\alpha'^{(i)} \succeq
\beta \succeq \alpha^{(j+2)}$. Hence rewriting $c_\gamma
c_{\alpha^{(1)}}\cdots c_{\alpha^{(k)}}$ to normal form requires only
a single left-to-right pass, which can be performed by a transducer:
it simply stores the symbol $c_{\gamma_i}$ in its state. Therefore the
relation ${}_{c_{\gamma}}K$ can be recognized by a transducer.

\subsection{Deducing biautomaticity}

Let $\rel{Q} \subseteq C^* \times A^*$ be the relation
\[
\bigl\{(c_{\alpha^{(1)}}c_{\alpha^{(2)}}\cdots c_{\alpha^{(k)}},\alpha^{(1)}\alpha^{(2)}\cdots\alpha^{(k)}) : k \in \nset\cup \{0\}, \text{each $\alpha^{(i)}$ is a column}\bigr\}.
\]
It is easy to see that $\rel{Q}$ is a rational relation. Let
\[
L = K \circ \rel{Q} = \bigl\{v \in A^* : (\exists u \in K)\bigl((u,v) \in \rel{Q}\bigr)\bigr\}.
\]
Then $L$ is a regular language over $A$ that maps onto $M_n$,
since the set of regular languages is closed under applying rational
relations. (In fact, $L$ is the set of column readings of tableaux,
but this is not important for us.) Then for any $\gamma \in A$,
\begin{align*}
(u,v) \in L_\gamma &\iff u \in L \land v \in L \land u\gamma =_{M_n} v\\
&\iff (\exists u',v' \in K)((u,u') \in \rel{Q} \land (v,v') \in \rel{Q} \land u'c_\gamma =_{M_n} v')\\
&\iff (\exists u',v' \in K)((u,u') \in \rel{Q} \land (v,v') \in \rel{Q} \land (u',v') \in K_{c_\gamma})\\
&\iff (u,v) \in \rel{Q}^{-1} \circ K_{c_\gamma} \circ \rel{Q}.
\end{align*}
Therefore, $L_\gamma$ is a rational relation. Now, if $(u,v) \in
L_\gamma$, then $|v| = |u|+1$ since $u\gamma =_{M_n} v$ and the
defining relations \eqref{eq:placticrel} preserve lengths of words.
By \fullref{Proposition}{prop:rationalbounded}, $L_\gamma\rpad$ and
$L_\gamma\lpad$ are regular.

Similarly, from the fact that ${}_{c_\gamma}K$ is a rational relation,
we deduce that ${}_\gamma L = \rel{Q}^{-1} \circ {}_{c_\gamma}K \circ
\rel{Q}$ is rational and thus, by
\fullref{Proposition}{prop:rationalbounded}, that ${}_\gamma L\rpad$
and ${}_\gamma L\lpad$ are regular.

\begin{theorem}
\label{thm:placticbiauto}
$(A,L)$ is a biautomatic structure for the Plactic monoid $M_n$.
\end{theorem}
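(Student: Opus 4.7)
The plan is to verify directly the four conditions in Definition \ref{def:autstruct}. Almost all of the hard work has been done in the preceding subsections; what remains is an assembly job together with handling the empty-word case.

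First I would note that $L$ is a regular language over $A$ and that every element of $M_n$ is represented by some word in $L$: these facts are immediate from the regularity of $K$, the rationality of $\rel{Q}$, the fact that $\rel{Q}$ is length-preserving and associates to each column word in $C^*$ its underlying word in $A^*$, and the fact that $K$ is a (regular) cross-section of $M_n$. In particular $L$ itself is a cross-section of $M_n$, since distinct words in $K$ map under $\rel{Q}$ to distinct words in $L$ (the map simply erases the $c$-decoration).

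Next, for each generator $\gamma \in A$, the preceding discussion has already shown that $L_\gamma$ and ${}_\gamma L$ are rational relations (as compositions $\rel{Q}^{-1} \circ K_{c_\gamma} \circ \rel{Q}$ and $\rel{Q}^{-1} \circ {}_{c_\gamma}K \circ \rel{Q}$), and moreover that every pair in these relations satisfies $\bigl||u|-|v|\bigr| = 1$ because the defining relations \eqref{eq:placticrel} preserve length. \fullref{Proposition}{prop:rationalbounded} then gives regularity of all four padded relations $L_\gamma\rpad$, $L_\gamma\lpad$, ${}_\gamma L\rpad$, and ${}_\gamma L\lpad$.

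Finally, the case $a = \emptyword$ requires a separate word: here $L_\emptyword = {}_\emptyword L = \{(u,v) \in L \times L : u =_{M_n} v\}$, and since $L$ is a cross-section this relation is exactly the diagonal $\{(u,u) : u \in L\}$. The diagonal of a regular language is padded-regular under both $\rpad$ and $\lpad$ (an automaton simply checks $u=v$ symbol-by-symbol while verifying membership in $L$). Combining all cases yields all eight padded languages required by the definition of biautomaticity, which finishes the proof. I do not foresee any real obstacle: the work is entirely bookkeeping, with the one subtle point being to remember to treat $a = \emptyword$ explicitly, which forces us to invoke the cross-section property of $L$.
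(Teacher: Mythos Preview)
Your proposal is correct and follows essentially the same route as the paper: the paper places the theorem immediately after the paragraphs establishing regularity of $L_\gamma\rpad$, $L_\gamma\lpad$, ${}_\gamma L\rpad$, ${}_\gamma L\lpad$ via the composition $\rel{Q}^{-1}\circ K_{c_\gamma}\circ\rel{Q}$ and \fullref{Proposition}{prop:rationalbounded}, so your argument is just a recap of that material together with the explicit $a=\emptyword$ case, which the paper leaves implicit. One small slip: $\rel{Q}$ is \emph{not} length-preserving (a single symbol $c_\alpha$ maps to the word $\alpha$ of length $|\alpha|$), but you do not actually need this---regularity of $L$ follows from $K$ regular and $\rel{Q}$ rational alone, and the injectivity of $K\to L$ you invoke for the cross-section claim holds because the column decomposition of a tableau's column reading is unique, not merely because the map ``erases the $c$-decoration''.
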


\begin{corollary}
\label{corol:placticbiautoanygen}
Let $B$ be a generating set for the Plactic monoid $M_n$. Then $M_n$
admits a biautomatic structure over $B$.
\end{corollary}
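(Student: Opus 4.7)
The plan is to combine Theorem~\ref{thm:placticbiauto} with the change-of-generators theorem \cite[Theorem~1.1]{duncan_change}, which guarantees that biautomaticity of a monoid is independent of the choice of \emph{semigroup} generating set. Theorem~\ref{thm:placticbiauto} furnishes a biautomatic structure $(A,L)$ for $M_n$ with respect to the semigroup generating set $A = \{1,2,\ldots,n\}$, so the cited theorem immediately produces a biautomatic structure with respect to any other semigroup generating set of $M_n$.

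The only subtlety concerns a generating set $B$ that is a monoid generating set but not a semigroup generating set. Because the defining relations \eqref{eq:placticrel} preserve length, the identity of $M_n$ is represented only by the empty word, so $B$ fails to be a semigroup generating set precisely when $1_{M_n} \notin B$. In this case I would pass to the semigroup generating set $B' = B \cup \{1_{M_n}\}$, apply the argument of the previous paragraph to obtain a biautomatic structure $(B', L')$ over $B'$, and then push this structure back to $B$ by applying the morphism $B'^* \to B^*$ that fixes $B$ pointwise and sends $1_{M_n}$ to the empty word. Regularity of the resulting language of representatives over $B$, and of the four relations $L_b\rpad$, ${}_bL\rpad$, $L_b\lpad$, ${}_bL\lpad$ for $b \in B \cup \{\emptyword\}$, follows from closure of regular languages and rational relations under such $\varepsilon$-substitution morphisms.

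I expect no substantial obstacle beyond this technicality: the hard work has already been carried out in Theorem~\ref{thm:placticbiauto}, where the transducers for left and right multiplication by column generators were constructed, and the transfer to an arbitrary generating set is a routine application of the cited change-of-generators theorem together with a standard manipulation to eliminate an identity symbol from the alphabet.
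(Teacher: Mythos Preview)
Your approach differs from the paper's and contains a small oversight. The paper does not invoke \cite[Theorem~1.1]{duncan_change} at all. Instead it observes that, because the defining relations \eqref{eq:placticrel} preserve length, each symbol in $A$ is indecomposable in $M_n$ and hence must belong to \emph{every} generating set $B$. Consequently $L \subseteq A^* \subseteq B^*$, and the very same language $L$ serves over $B$. For $b \in B$ one writes $b =_{M_n} u_1\cdots u_m$ with $u_i \in A$ and obtains $L_b = L_{u_1}\circ\cdots\circ L_{u_m}$ and ${}_bL = {}_{u_1}L\circ\cdots\circ{}_{u_m}L$; regularity of the four padded relations then follows from \cite[Proposition~2.4]{hoffmann_notions}. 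This avoids any black box and sidesteps the identity issue entirely (if $b = 1_{M_n}$ the composition is empty and one is back to $L_\varepsilon$).

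Your route via the change-of-generators theorem is viable, but the oversight is that $A = \{1,\ldots,n\}$ is itself \emph{not} a semigroup generating set for $M_n$: the identity is the class of the empty word and, by length preservation, is not a nonempty product of elements of $A$. Thus \cite[Theorem~1.1]{duncan_change} does not apply directly to $(A,L)$. The repair is exactly the manoeuvre you describe for $B$: adjoin a symbol $e$ for the identity, note that $L_e = L_\varepsilon$ and ${}_eL = {}_\varepsilon L$, so $(A\cup\{e\},L)$ is biautomatic over a genuine semigroup generating set, and only then invoke the theorem. You should apply this on the $A$ side as well as on the $B$ side. With that correction your argument goes through, though the paper's direct argument is shorter and yields the explicit structure $(B,L)$ rather than an unspecified one produced by the change-of-generators machinery.
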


\begin{proof}
Since each generator in $A$ admits no non-trivial decomposition in
$M_n$, it follows that every element of $A$ must also appear in
$B$. Hence $L$ is also a language over $B$. Let $b \in B$ and let
$u_1\cdots u_n \in A^*$ (where $u_i \in A$) be such that $b =_{M_n} u_1\cdots
u_n$. Then $L_b = L_{u_1} \circ L_{u_2} \circ \cdots \circ L_{u_n}$
and ${}_b L = {}_{u_1}L \circ {}_{u_2}L \circ \cdots \circ
{}_{u_n}L$. So $L_b\delta_R$, $L_b\delta_L$, ${}_b L\delta_R$, and
${}_b L\delta_R$ are all regular (see, for example,
\cite[Proposition~2.4]{hoffmann_notions}). Hence $(B,L)$ is a
biautomatic structure for $M_n$.
\end{proof}

\bibliography{semigroups,automaticsemigroups,languages,c_publications,\jobname}
\bibliographystyle{alphaabbrv}

\end{document}